\numberwithin{equation}{section}
\newtheorem{The}[subsection]{Theorem}
\newtheorem{Lem}[subsection]{Lemma}
\newtheorem{Cor}[subsection]{Corollary}
\theoremstyle{definition}
\theoremstyle{remark}
\newcommand{\field}{\mathbb{F}}
\newcommand{\bbN}{\mathbb{N}}
\newcommand{\mcB}{\mathcal {B}}
\newcommand{\mcC}{\mathcal {C}}
\newcommand{\mcG}{\mathcal {G}}
\newcommand{\mcV}{\mathcal {V}}
\newcommand{\tZ}{\widetilde{Z}}
\newcommand{\V}{{\mathcal {V}}}
\newcommand{\sltwo}{SL_2(\field_p)}
\newcommand{\slinv}{\field[V]^{G}}
\newcommand{\tr}{\mathop{\rm tr}}
\newcommand{\lt}{\mathop{\rm LT}}
\newcommand{\lm}{\mathop{\rm LM}}
\newcommand{\wt}{\mathop{\rm wt}}
\title[$SL_2(\mathbb{F}_p)$-Invariants of the Third Symmetric Power]
{The Invariants of the Third Symmetric Power Representation of $SL_2(\mathbb{F}_p)$}
\author{Ashley Hobson}
\address{School of Mathematics, Statistics \&  Actuarial Science \\
 \hfil\break\indent University of Kent, Canterbury, CT2 7NF, UK}
\email{ashleyghobson@googlemail.com}
\author{R.\ James Shank}
\address{School of Mathematics, Statistics \&  Actuarial Science \\
 \hfil\break\indent University of Kent, Canterbury, CT2 7NF, UK}
\email{R.J.Shank@kent.ac.uk}
\thanks{The research of the first author was supported by grants from EPSRC}
\subjclass{13A50}
\date{\today}
\begin{document}
\begin{abstract}
For a prime $p>3$, we compute a finite generating set for the
$SL_2(\mathbb{F}_p)$-invariants of the third symmetric power
representation.  The proof relies on the construction of an infinite
SAGBI basis and uses the Hilbert series calculation of Hughes and
Kemper.
\end{abstract}

\maketitle

\section{Introduction}

Consider the generic binary cubic over a field $\field$ of characteristic not $3$:
\begin{equation*} a_0X^3+3a_1X^2Y+3a_2XY^2+a_3Y^3.\end{equation*}
Identifying
\begin{equation*} 
X=\left[\begin{array}{c} 0\\1\end{array}\right] \,\, \mbox{and} \,\, Y=\left[\begin{array}{c} 1\\0\end{array}\right] 
\end{equation*}
induces a left action of the general linear group $GL_2(\mathbb{F})$ on the third symmetric power
$$V:={\rm Span}_{\field}[\,Y^3,3Y^2X,3YX^2,X^3\,]$$ and a right action on the dual 
$V^*={\rm Span}_{\field}[a_3,a_2,a_1,a_0].$
For example
$$\begin{array}{cccccc}\sigma=\left[ \begin{array}{cc} 1&
1\\ 0&1 \end{array}\right] & {\rm acts\ on}\ V^*\ {\rm as}&
\left[ \begin{array}{cccc} 1&3&3&1\\ 0&1&2&1\\ 0&0&1&1\\ 0&0&0&1 \end{array}\right]&&&
\end{array}
$$
with $a_3=[1\ 0\ 0\ 0]$, $a_2=[0\ 1\ 0\ 0]$, $a_1=[0\ 0\ 1\ 0]$, $a_0=[0\ 0\ 0\ 1]$.
The action on $V^*$ extends to an action by algebra automorphisms on the symmetric algebra 
$\field[V]=\field[a_3,a_2,a_1,a_0]$. For any subgroup $G\leq GL_2(\field)$, we denote the subring of invariant 
polynomials by $\field[V]^G$.

Throughout we assume that $\field$ has characteristic $p>3$. Thus $\field_p\subseteq\field$ and $SL_2(\field_p)\leq GL_2(\field)$.
The primary goal of this paper is to compute a finite generating set for $\field[V]^{SL_2(\field_p)}$.
We note that $V$ is the unique four-dimensional irreducible representation of $\sltwo$ (see, for example, \cite[pp.\,14--16]{Alperin}).
Also, for $p\not=7$, $\field[V]^{SL_2(\field_p)}$ is not Cohen-Macaulay and in fact has depth $3$ \cite[\S 5]{ShankWehlauDepth}.
In the language of L.E.~Dickson \cite[Lecture III\,\S 9]{Dickson}, we give a fundamental system for the formal modular invariants of the binary cubic.
Dickson considered this problem but was only able to identify a few specific invariants.
We proceed by constructing the required invariants and then proving that the given set generates $\field[V]^{SL_2(\field_p)}$. Our proof
relies on the construction of an infinite SAGBI basis and uses the Hilbert series calculation of
Hughes and Kemper \cite{HK}. 
Recall that a SAGBI basis is a {\bf S}ubalgebra {\bf A}nalog of a {\bf G}r\"obner {\bf B}asis for {\bf I}deals.
SAGBI bases were introduced independently by Robbiano-Sweedler \cite{rs} and Kapur-Madlener \cite{km}; 
a useful reference is Chapter~11 of Sturmfels \cite{Sturmfels} (who uses the term {\it canonical subalgebra basis}).
The ring of invariants of a modular representation of a $p$-group always has a finite SAGBI basis for an appropriate
choice of term order, see \cite{sw:cmipg}. A  finite SAGBI basis for the ring of invariants of the Sylow $p$-subgroup
of $\sltwo$ was computed in \cite{Shank1998}. Extensive preliminary calculations for small primes, using  MAGMA \cite{magma}, involving SAGBI bases and 
the relative transfer map, lead to the given generating set (see \cite{Hobson}). 
We use the graded reverse lexicographic order with $a_0<a_1<a_2<a_3$.
For background material on term orders and Gr\"obner bases see  Adams-Loustaunau \cite{AL}.
For background material on the invariant theory of finite groups
see Benson \cite{Benson}, Derksen-Kemper \cite{DK} or Neusel-Smith \cite{NS}.

A classical example of an invariant of a binary form is the discriminant, which in this case can be written as
\begin{equation*} D:=3a_2^2a_1^2-4a_3a_1^3-4a_2^3a_0+6a_3a_2a_1a_0-a_3^2a_0^2. \end{equation*}
Following Lecture III of  L.\,E.\,Dickson's  Madison Colloquium~\cite{Dickson} we identify the $SL_2(\field_p)$-invariant
\begin{equation*} L:=3(a_2^pa_1-a_2a_1^p)-(a_3^pa_0-a_3a_0^p).\end{equation*}

Let $B$ denote the Borel subgroup of $SL_2(\field_p)$ consisting of upper triangular matrices and let $P$ denote the unique
Sylow $p$-subgroup of $B$. Observe that $P$ is cyclic of order $p$ and is also a Sylow $p$-subgroup of $SL_2(\field_p)$.
Define 
$$N:=\prod_{\tau\in P}(a_3)\tau.$$
By Corollary~\ref{prodinv}, $N\cdot a_0$ is $SL_2(\field_p)$-invariant (or see \cite{Dickson}).

For a subgroup $H$ of a group $G$, choose coset representatives $G/H$ and define the
relative transfer
\begin{eqnarray*} \mbox{tr}^G_H: \field[V]^H &\rightarrow& \field[V]^G \\
f &\mapsto& \sum_{\tau \in G/H} (f)\tau. \end{eqnarray*}
The transfer, $\tr^G$, is the special case when $H$ is the trivial group. Define
$$K:=-\mbox{tr}^{SL_2(\field_p)}(a_1^{p-1}).$$
We show in Lemma~\ref{Klem} that $K$ is non-zero with lead monomial $a_2^{p-1}$.

For $\omega\in \field_p^*$, the diagonal matrix
$$\begin{array}{cccccc}\rho_{\omega}=\left[ \begin{array}{cc} \omega& 0\\ 0&\omega^{-1} \end{array}\right] & {\rm acts\ on}\ V^*\ {\rm as}&
\left[ \begin{array}{cccc} \omega^3&0&0&0\\ 0&\omega&0&0\\ 0&0&\omega^{-1}&0\\ 0&0&0&\omega^{-3} \end{array}\right].&&&
\end{array}
$$
This motivates the definition of a multiplicative weight function on monomials by
\begin{equation*} \mbox{wt}(a_i)=2i-3. \end{equation*}
Thus for any monomial $\beta$, we have $(\beta)\rho_{\omega}=\omega^{\mbox{wt}(\beta)}\beta$.
Since $\omega^{p-1}=1$, it is convenient to assume that the weight function takes values in
$\mathbb{Z}/(p-1)\mathbb{Z}$. Since $B$ is generated by elements of $P$ and $\rho_{\omega}$ for $\omega\in\field_p^*$,
it is clear that the $B$-invariants are precisely the isobaric $P$-invariants of weight zero (modulo $p-1$).

We show in Lemma~\ref{Isobaricprodtransferlem} that $N$ is isobaric of weight $3$ (modulo $p-1$). 
Let $c$ denote the smallest positive integer satisfying
$3c\equiv_{(p-1)}0$. Thus $c=(p-1)/3$ if $p\equiv_{(3)} 1$ and $c=p-1$ if $p\equiv_{(3)} -1$. 
Then $N^c$ is $B$-invariant and 
\begin{equation*} \delta:=\mbox{tr}_B^{SL_2(\mathbb{F}_p)}(N^c) \end{equation*} 
is $SL_2(\field_p)$-invariant. It follows from Theorem~\ref{orbitdiff} that the lead monomial of $\delta$
is $a_3^{pc}$. We show in Theorem~\ref{hsop} that 
$\{D,K,Na_0,\delta\}$ forms a homogeneous system of parameters, i.e.,
the set is algebraically independent and $\field[V]^{SL_2(\field_p)}$ is a finite module over $\field[D,K,Na_0,\delta]$.

It is easily verified that $d:=a_1^2-a_2a_0$ and $e:=2a_1^3+a_0(a_3a_0-3a_2a_1)$ are isobaric $P$-invariants of weight
$-2$ and $-3$ respectively. Define
$$\tilde{e}:=\mbox{tr}_B^{SL_2(\field_p)}(Ne).$$
We will show, see Theorem~\ref{maintheorem}, that for $p\equiv_{(3)}1$, the $SL_2(\field_p)$-invariants are generated by
$$D,K,L, Na_0,\delta,\tilde{e}$$
and an explicitly described finite subset of the image of the transfer.
For $p\equiv_{(3)}-1$ the additional invariant 
$$\tilde{d}:=\mbox{tr}_B^{SL_2(\field_p)}(N^{\frac{p+1}{3}}d)$$
is required.

\section{Preliminaries, lead monomials and t\^{e}te-\`{a}-t\^{e}tes}\label{prelim_sec}

For the remainder of the paper we use $G$ to denote $SL_2(\field_p)$. 
The following generalises \cite[2.4]{ShankWehlauDepth}.

\begin{Lem} If $f$ is an isobaric polynomial of weight $\lambda$, then $\tr^{P}(f)$ is isobaric of weight $\lambda$.
Furthermore $N$ is isobaric of weight $3$.
\label{Isobaricprodtransferlem}\end{Lem}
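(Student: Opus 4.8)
The plan is to exploit the single structural fact that $\rho_\omega$ normalizes $P$. Both $\rho_\omega$ and $P$ lie in the Borel $B$, and since $P$ is the unique Sylow $p$-subgroup of $B$ it is normal in $B$; concretely, a direct computation gives $\rho_\omega\sigma\rho_\omega^{-1}=\sigma^{\omega^2}\in P$, so conjugation by $\rho_\omega$ permutes the elements of $P$ bijectively. This is what drives both assertions, since it lets me reindex a sum or product over $P$ after commuting a factor of $\rho_\omega$ past it. I will also use the elementary observation that the relation $(g)\rho_\omega=\omega^\mu g$, \emph{holding for every} $\omega\in\field_p^*$, forces $g$ to be isobaric of weight $\mu$: writing $g$ as a sum of monomials and comparing coefficients, each monomial $\beta$ occurring in $g$ must satisfy $\omega^{\wt(\beta)}=\omega^\mu$ for all $\omega$, and since the characters $\omega\mapsto\omega^k$ for distinct $k\in\mathbb{Z}/(p-1)$ are distinct, this gives $\wt(\beta)\equiv\mu$.

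For the first statement I compute $(\ptr(f))\rho_\omega$. As the action is a right action, $(\ptr(f))\rho_\omega=\sum_{\tau\in P}(f)(\tau\rho_\omega)$. Factoring $\tau\rho_\omega=\rho_\omega\,\tau'$ with $\tau':=\rho_\omega^{-1}\tau\rho_\omega\in P$, and using that $f$ is isobaric of weight $\lambda$, I get $(f)(\tau\rho_\omega)=((f)\rho_\omega)\tau'=\omega^\lambda(f)\tau'$. As $\tau$ runs over $P$ so does $\tau'$, whence $(\ptr(f))\rho_\omega=\omega^\lambda\,\ptr(f)$. By the character-separation observation above, $\ptr(f)$ is isobaric of weight $\lambda$.

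The second statement follows from the same manipulation applied to the product defining $N$. Here $(N)\rho_\omega=\prod_{\tau\in P}(a_3)(\tau\rho_\omega)=\prod_{\tau\in P}\omega^{3}(a_3)\tau'=\omega^{3p}N$, using $\wt(a_3)=3$ and $|P|=p$. Since $\omega^{p-1}=1$ and $3p\equiv 3\pmod{p-1}$, we have $\omega^{3p}=\omega^{3}$, so $(N)\rho_\omega=\omega^{3}N$ and $N$ is isobaric of weight $3$. There is no serious obstacle in this argument; the one point deserving care is the passage from the single eigenvalue relation to the assertion that the polynomial is genuinely isobaric, which requires invoking the relation for \emph{all} $\omega\in\field_p^*$ rather than just one, together with the distinctness of the characters $\omega\mapsto\omega^{k}$ on $\field_p^*$.
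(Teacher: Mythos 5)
Your proof is correct and follows essentially the same route as the paper: you use the normality of $P$ in $B$ (conjugation by $\rho_\omega$ permutes $P$) to commute $\rho_\omega$ past the sum, respectively the product, defining $\tr^P(f)$ and $N$. You merely make explicit two points the paper leaves implicit --- that the eigenvalue relation for all $\omega\in\field_p^*$ forces the isobaric property via distinctness of the characters $\omega\mapsto\omega^k$, and the reduction $\omega^{3p}=\omega^3$ in the ``similar calculation'' for $N$ --- both of which are handled correctly.
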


\begin{proof} The result follows from the fact that $P$ is normal in $B$. 
For $\omega \in \mathbb{F}_p^*$
\begin{eqnarray*} 
\left(\mbox{tr}^{P}\left(f\right)\right)\rho_{\omega}
&=&\sum_{\tau \in P} \left(f\right)\tau\rho_{\omega} 
=\sum_{\tau' \in P} \left(f\right)\rho_{\omega}\tau'\\
&=&\sum_{\tau' \in P} \omega^{\lambda}\left(f\right)\tau'
=\omega^{\lambda}\mbox{tr}^P(f).\end{eqnarray*} 
Thus $\tr^P(f)$ is isobaric of weight $\lambda$. A similar calculation gives $\wt(N)=\wt(a_3)=3$.
\end{proof}

Let $Q$ denote the subgroup generated by the transpose of $\sigma$, i.e., 
the lower triangular Sylow $p$-subgroup, and define
$$\eta:=\left[\begin{array}{cc} 0 & 1 \\ -1 & 0 \end{array}\right].$$

\begin{Lem} $Q\cup\{\eta\}$ is a set of coset representatives for $B$ in $SL_2(\mathbb{F}_p)$. \label{Bcosetreps} 
\end{Lem}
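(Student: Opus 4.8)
The plan is to reduce the statement to a counting argument: I would first compute both the index $[G:B]$ and the cardinality of $Q\cup\{\eta\}$, check that they agree, and then it suffices to show that the listed elements represent pairwise distinct cosets. A general element of $B$ has the form $\left[\begin{array}{cc} a & b\\ 0 & a^{-1}\end{array}\right]$ with $a\in\field_p^*$ and $b\in\field_p$, so $|B|=p(p-1)$; since $|G|=p(p^2-1)$, this gives $[G:B]=(p^2-1)/(p-1)=p+1$. On the other side, $Q=\bigl\{\left[\begin{array}{cc}1&0\\ t&1\end{array}\right]:t\in\field_p\bigr\}$ (generated by the transpose of $\sigma$) has exactly $p$ elements, and $\eta\notin Q$ since its diagonal is not $(1,1)$. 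Hence $Q\cup\{\eta\}$ has $p+1$ elements, and it is enough to verify that no two of them lie in the same coset.

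Because the transfer $\tr^G_B$ is defined via the right action on $\field[V]$, the relevant cosets are the right cosets $B\tau$, and $B\tau_1=B\tau_2$ exactly when $\tau_1\tau_2^{-1}\in B$. Writing $q_t$ for the element of $Q$ with lower-left entry $t$, I would compute $q_sq_t^{-1}=\left[\begin{array}{cc}1&0\\ s-t&1\end{array}\right]$, which is upper triangular precisely when $s=t$; thus the $p$ elements of $Q$ already lie in distinct cosets. It then remains only to separate $\eta$ from every $q_t$ at once: the product $\eta q_t^{-1}=\left[\begin{array}{cc}-t&1\\ -1&0\end{array}\right]$ has lower-left entry $-1\neq 0$, so it never lies in $B$. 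Therefore $B\eta$ differs from all the $Bq_t$, and all $p+1$ cosets are pairwise distinct.

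Combining the two steps, the $p+1$ distinct cosets represented by $Q\cup\{\eta\}$ exhaust the $p+1$ right cosets of $B$ in $G$, which is the claim. The argument is entirely elementary, so there is no serious obstacle; the only points requiring care are (i) matching the coset convention to the right action underlying the transfer, so that the later invariants $\delta=\tr^G_B(N^c)$ and $\tilde e$ are genuinely $G$-invariant, and (ii) comparing $\eta$ against all the $q_t$ uniformly, which the single lower-left-entry computation above handles in one stroke. If one prefers to work with the left cosets $\tau B$, the same two products with $\tau_1^{-1}\tau_2$ in place of $\tau_1\tau_2^{-1}$ yield identical conclusions, so the choice of side is immaterial to the counting.
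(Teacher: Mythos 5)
Your proof is correct and follows essentially the same route as the paper: count that $[SL_2(\field_p):B]=p+1$ matches $|Q\cup\{\eta\}|$, then verify pairwise distinctness of the cosets by checking that the relevant products ($q_sq_t^{-1}$ and $\eta q_t^{-1}$, or their left-coset analogues) fail to be upper triangular. You merely spell out the index computation and the ``straightforward calculation'' that the paper leaves implicit, and your remark that the choice between left and right cosets is immaterial here is accurate.
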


\begin{proof} Since the index of $B$ in $SL_2(\mathbb{F}_p)$ is $p+1$, we have the right number of elements.
To show that the cosets $(\sigma^T)^nB$ are distinct for $n=1,\ldots,p$, it is sufficient to show that
$(\sigma^T)^nB\not=B$ for $n<p$; this is clear. To show that $\eta B\not=(\sigma^T)^nB$, it is sufficient to show that
$\eta^{-1}(\sigma^T)^n\not\in B$; this is a straight forward calculation.
\end{proof}

\begin{Lem} $N a_0=-a_3\prod_{\tau\in Q}(a_0)\tau$.
\label{factor}
\end{Lem}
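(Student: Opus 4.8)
The plan is to compute both sides explicitly as products of $p$ linear forms over the two Sylow orbits and then match the factors by means of the involution $n\mapsto n^{-1}$ on $\field_p^*$. Since $\sigma^n$ sends $X\mapsto X+nY$ and fixes $Y$, substituting into the generic cubic shows that $(a_3)\sigma^n=a_3+3na_2+3n^2a_1+n^3a_0$, so that
\[
N=\prod_{n=0}^{p-1}\bigl(a_3+3na_2+3n^2a_1+n^3a_0\bigr),
\]
and the $n=0$ factor is simply $a_3$. By the symmetric calculation, $(\sigma^T)^n$ fixes $X$ and sends $Y\mapsto nX+Y$, giving $(a_0)(\sigma^T)^n=a_0+3na_1+3n^2a_2+n^3a_3$ and hence
\[
\prod_{\tau\in Q}(a_0)\tau=\prod_{n=0}^{p-1}\bigl(a_0+3na_1+3n^2a_2+n^3a_3\bigr),
\]
whose $n=0$ factor is $a_0$.

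The key step is to observe that for $n\in\field_p^*$ the two families of linear forms agree up to a scalar: factoring $n^3$ out of $(a_3)\sigma^n$ gives $(a_3)\sigma^n=n^3\bigl(a_0+3n^{-1}a_1+3n^{-2}a_2+n^{-3}a_3\bigr)=n^3\,(a_0)(\sigma^T)^{n^{-1}}$. As $n$ ranges over $\field_p^*$ so does $n^{-1}$, so I would reindex the product over the nonzero residues by $m=n^{-1}$, collecting the scalar $\prod_{n\in\field_p^*}n^3=\bigl(\prod_{n\in\field_p^*}n\bigr)^{3}$. By Wilson's theorem $\prod_{n\in\field_p^*}n=(p-1)!\equiv-1$, so this scalar equals $-1$.

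Assembling these identities yields $N=a_3\cdot(-1)\cdot\prod_{m\in\field_p^*}(a_0)(\sigma^T)^m$, while $\prod_{\tau\in Q}(a_0)\tau=a_0\cdot\prod_{m\in\field_p^*}(a_0)(\sigma^T)^m$. Multiplying the first identity by $a_0$ and comparing gives $Na_0=-a_3\prod_{\tau\in Q}(a_0)\tau$, as required. Conceptually this reflects the fact that conjugation by $\eta$ interchanges $P$ and $Q$ (indeed $\eta^{-1}\sigma\eta=(\sigma^T)^{-1}$), while $\eta$ acts on $V^*$ by $a_3\mapsto a_0$ and $a_0\mapsto-a_3$; however, turning that symmetry into a proof would require the $\eta$-invariance of $Na_0$, which is exactly what is established later, so I prefer the self-contained orbit computation above. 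The one point requiring care is the bookkeeping of the scalar: separating off the fixed $n=0$ factors $a_3$ and $a_0$ cleanly and pinning down the sign from Wilson's theorem.
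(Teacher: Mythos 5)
Your proof is correct and takes essentially the same route as the paper: the paper likewise expands the two orbits $a_3P=\{a_3+3sa_2+3s^2a_1+s^3a_0\}$ and $a_0Q=\{s^3a_3+3s^2a_2+3sa_1+a_0\}$, splits off the $s=0$ factors, pulls $s^3$ out of each nonzero-parameter factor, reindexes by $s\mapsto s^{-1}$, and evaluates $\prod_{s\in\field_p^*}s^3=-1$ to obtain the sign. The only cosmetic difference is that the paper carries $a_0$ along in a single chain of equalities starting from $Na_0$ rather than first proving an identity for $N$ alone and then multiplying by $a_0$.
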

\begin{proof} Consider the orbits
$$a_3P=\{a_3+3sa_2+3s^2a_1+s^3a_0\mid s\in \field_p\}$$
and 
$$a_0Q=\{s^3a_3+3s^2a_2+3sa_1+a_0\mid s\in \field_p\}.$$
Thus 
\begin{eqnarray*}
N a_0&=&a_0\prod_{s\in \field_p}(a_3+3sa_2+3s^2a_1+s^3a_0)=a_0a_3\prod_{s\in\field_p^*}(a_3+3sa_2+3s^2a_1+s^3a_0)\\
&=&a_0a_3\prod_{s\in\field_p^*}s^3\left(\left(s^{-1}\right)^3a_3+3\left(s^{-1}\right)^2a_2+3s^{-1}a_1+a_0\right)\\
&=&a_3\left(\prod_{s\in\field_p^*}s^3\right)\prod_{\tau\in Q}\left(a_o\right)\tau=-a_3\prod_{\tau\in Q}\left(a_o\right)\tau
\end{eqnarray*}
\end{proof}

Since $\{\sigma,\sigma^T\}$ generates $SL_2(\field_p)$, any polynomial which is both $P$-invariant and $Q$-invariant is
$SL_2(\field_p)$-invariant, giving the following corollary (see also Lecture III \S 9 of \cite{Dickson}).

 \begin{Cor}  $N a_0$ is $SL_2(\field_p)$-invariant.
\label{prodinv} 
\end{Cor}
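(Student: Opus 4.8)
The plan is to show that $Na_0$ is both $P$-invariant and $Q$-invariant, and then invoke the fact (stated just above the corollary) that $\{\sigma,\sigma^T\}$ generates $SL_2(\field_p)$, so that any polynomial fixed by both $P$ and $Q$ is automatically $G$-invariant.

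First I would establish $P$-invariance of $Na_0$. By definition $N=\prod_{\tau\in P}(a_3)\tau$ is a product over the whole orbit $a_3P$; since $P$ is a group, right multiplication by any element of $P$ merely permutes the factors of this product, so $N$ is $P$-invariant. The remaining factor $a_0$ is itself $P$-invariant: from the description $a_3P=\{a_3+3sa_2+3s^2a_1+s^3a_0\mid s\in\field_p\}$ used in Lemma~\ref{factor}, one reads off that $\sigma$ acts on the $a_i$ fixing $a_0$ (indeed $a_0$ is the lowest-weight vector). Hence the product $Na_0$ is $P$-invariant.

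Next I would establish $Q$-invariance, and here the key is to use the factorisation supplied by Lemma~\ref{factor}, namely $Na_0=-a_3\prod_{\tau\in Q}(a_0)\tau$. The product $\prod_{\tau\in Q}(a_0)\tau$ runs over the full orbit $a_0Q$, so by the same group-permutation argument it is $Q$-invariant. The leftover factor $a_3$ is $Q$-invariant because $Q$ is the lower-triangular Sylow $p$-subgroup (generated by $\sigma^T$), which fixes the highest-weight vector $a_3$. Therefore the right-hand side, and hence $Na_0$, is $Q$-invariant.

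The argument is essentially bookkeeping, so there is no real obstacle; the one point requiring care is verifying the two ``leftover factor'' claims, i.e.\ that $a_0$ is fixed by $P$ and $a_3$ is fixed by $Q$. Both follow immediately from the explicit orbit formulas (set the remaining factor aside and note the $s=0$ term fixes the relevant variable), or equivalently from the fact that $a_0$ and $a_3$ are the extreme weight vectors for the two opposite Borel unipotent subgroups. Having shown $Na_0\in\field[V]^P\cap\field[V]^Q$, the generation statement for $\langle\sigma,\sigma^T\rangle=SL_2(\field_p)$ closes the proof.
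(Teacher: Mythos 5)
Your proposal is correct and follows essentially the same route as the paper: the paper obtains the corollary from exactly the factorisation of Lemma~\ref{factor} combined with the preceding observation that $\{\sigma,\sigma^T\}$ generates $SL_2(\field_p)$, so that simultaneous $P$- and $Q$-invariance suffices. You simply spell out the routine verifications the paper leaves implicit, namely that the orbit products are invariant under permutation of factors and that $a_0$ and $a_3$ are fixed by $P$ and $Q$ respectively.
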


\begin{The} Suppose $f$ is an isobaric $P$-invariant with $\wt(N\cdot f)=0$. Then $a_0$ divides $\tr_B^G(N\cdot f)-N\cdot f$
and, if $a_0$ does not divide $f$, the lead terms of $\tr_B^G(N\cdot f)$ and $N\cdot f$ are equal.
\label{orbitdiff}\end{The}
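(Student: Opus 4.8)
The plan is to reduce both assertions to a single divisibility statement about the image of $N$ under each nontrivial coset representative. Since $N$ is the product over the $P$-orbit of $a_3$ it is $P$-invariant, and $f$ is $P$-invariant by hypothesis, so $N\cdot f$ is an isobaric $P$-invariant of weight $\wt(N\cdot f)=0$ and hence $B$-invariant. Using the coset representatives $Q\cup\{\eta\}$ of Lemma~\ref{Bcosetreps}, and noting that the identity $1\in Q$ contributes the term $N\cdot f$ itself, I would write
$$\tr_B^G(N\cdot f)-N\cdot f=\sum_{\tau\in Q\setminus\{1\}}(N\cdot f)\tau+(N\cdot f)\eta.$$
Since $(N\cdot f)\tau=(N)\tau\cdot(f)\tau$, it suffices to prove that $a_0$ divides $(N)\tau$ for every $\tau\in(Q\setminus\{1\})\cup\{\eta\}$; this at once yields the first claim.

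For the divisibility I would invoke Lemma~\ref{factor}. Pulling the $s=0$ factor $a_0$ out of the $Q$-orbit product and cancelling it gives $N=-a_3\prod_{s=1}^{p-1}(a_0)(\sigma^T)^s$. Applying the automorphism $(\sigma^T)^u$ with $1\le u\le p-1$, and using that $Q$ is abelian so that $(a_0)(\sigma^T)^s(\sigma^T)^u=(a_0)(\sigma^T)^{s+u}$, the exponents $s+u$ range over $\fp\setminus\{u\}$; since $u\neq0$ this set contains $0$, so the factor $(a_0)(\sigma^T)^0=a_0$ occurs and therefore $a_0\mid(N)(\sigma^T)^u$. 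For the coset of $\eta$ I would instead track the factor $a_3$ of $N$: a short computation of the action on $V^*$ gives $(a_3)\eta=a_0$, so again $a_0\mid(N)\eta$. This establishes $a_0\mid\tr_B^G(N\cdot f)-N\cdot f$.

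For the lead-term claim I would exploit the following property of the graded reverse lexicographic order with $a_0<a_1<a_2<a_3$: within a fixed total degree the last coordinate of an exponent difference is the exponent of $a_0$, so any monomial divisible by $a_0$ is strictly smaller than any monomial of that degree not divisible by $a_0$. Each degree-one factor of $N$ has lead monomial $a_3$, so by multiplicativity of lead terms $\lt(N)=a_3^p$ and $\lt(N\cdot f)=a_3^p\,\lt(f)$. When $a_0\nmid f$ (it suffices to treat $f$ homogeneous, as in every application) the grevlex-largest monomial of $f$ avoids $a_0$, hence $a_0\nmid\lt(N\cdot f)$. Writing $\tr_B^G(N\cdot f)=N\cdot f+R$ with $R$ homogeneous of degree $\deg(N\cdot f)$ and divisible by $a_0$, the property above forces every monomial of $R$ to be strictly below $\lt(N\cdot f)$, which itself does not occur in $R$; thus $\lt(N\cdot f)$ survives undisturbed and equals $\lt(\tr_B^G(N\cdot f))$.

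I expect the main obstacle to be the divisibility computation of the second paragraph, specifically recognising for the representatives in $Q$ that after applying $(\sigma^T)^u$ the surviving orbit factors still include the pure $a_0$ term. Rewriting $N$ by means of Lemma~\ref{factor} together with the additive structure of $Q$ is what makes this transparent; the $\eta$-coset is easier because $a_3$ divides $N$ and is carried to $a_0$. The order-theoretic bookkeeping of the last paragraph is then routine once the key property of grevlex is isolated.
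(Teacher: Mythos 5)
Your proof is correct, but it reorganises the argument differently from the paper. The paper's proof never touches the individual translates $(N)\tau$: it uses the $G$-invariance of $Na_0$ (Corollary~\ref{prodinv}) to write $\tr_B^G(N\cdot f)-N\cdot f=N\bigl(a_0\tr_B^G(fa_0^{-1})-f\bigr)$, expands the relative transfer over the representatives of Lemma~\ref{Bcosetreps} as a sum of fractions $(f)\tau/(a_0)\tau$ and $-(f)\eta/a_3$, and then invokes Lemma~\ref{factor} only to say that $N$ is a least common multiple of the denominators $\{a_3\}\cup\{(a_0)\tau\mid\tau\in Q\setminus\{1\}\}$; clearing denominators exhibits the difference as $a_0J$. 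You instead stay inside the polynomial ring and prove the termwise statement $a_0\mid(N)\tau$ for every nontrivial representative, using the additive shift structure of $Q$ on the factorisation $N=-a_3\prod_{\tau\in Q\setminus\{1\}}(a_0)\tau$ (the same Lemma~\ref{factor}, read multiplicatively) together with $(a_3)\eta=a_0$. The two computations encode the same underlying identity --- the paper's implicit $(N)\tau=Na_0/(a_0)\tau$ is exactly your observation that the shifted orbit product reproduces all the linear factors but one, and in particular picks up the factor $a_0$ --- but your version avoids the fraction-field manipulation entirely and makes the divisibility of each coset summand explicit, while the paper's version packages the whole transfer at once via the invariance of $Na_0$, an identity it reuses elsewhere. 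Your treatment of the lead-term claim (in grevlex with $a_0$ smallest, every monomial of a fixed degree divisible by $a_0$ is strictly smaller than every monomial of that degree not divisible by $a_0$, so the $a_0$-divisible correction cannot disturb $\lt(N\cdot f)$) coincides with the paper's, and your explicit reduction to homogeneous $f$ supplies a hypothesis the paper leaves implicit, since for inhomogeneous $f$ the implication from $a_0\nmid f$ to $a_0\nmid\lm(N\cdot f)$ can fail.
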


\begin{proof} Using the fact that $N a_0$ is $SL_2(\field_p)$-invariant we see that
\begin{eqnarray*}\mbox{tr}_B^G(N\cdot f)-N\cdot f&=&N a_0\left(\mbox{tr}_B^G\left(fa_0^{-1}\right)\right)-N\cdot f \\
&=&N\left(a_0\mbox{tr}_B^G\left(fa_0^{-1}\right)-f\right).\end{eqnarray*}
Observe that $(a_0)\eta=-a_3$. Thus, using the coset representatives from Lemma~\ref{Bcosetreps}, we have
\begin{equation*} 
a_0\mbox{tr}_B^G\left(fa_0^{-1}\right)-f= a_0\left(\sum_{\tau \in Q\setminus\{1\}} \frac{(f)\tau}{(a_0)\tau} -\frac{\left(f\right)\eta}{a_3}\right). \end{equation*}
From Lemma~\ref{factor}, $N$ is a least common multiple of
$\{a_3\}\cup \{(a_0)\tau\mid\tau \in Q\setminus \{1\}\}$. Taking $N$ as the common denominator in the above sum gives
$$a_0\mbox{tr}_B^G(fa_0^{-1})-f=\frac{a_0J}{N}$$
for some polynomial $J$. Therefore  $\mbox{tr}_B^G(N\cdot f)-N\cdot f=a_0J$. If $a_0$ does not divide $f$, then the lead term of $N\cdot f$ is not divisible by $a_0$
and is also the lead term of  $\mbox{tr}_B^G(N\cdot f)$.
\end{proof}

We use LM to denote lead monomial and LT to denote lead term. It is clear that $\lm(N)=a_3^p$.
In the following lemmas, we use the lead monomial calculations from \cite{Shank1998}. 
Note that although the basis used in \cite{Shank1998} is different from the one used here, the change of basis is upper triangular 
and so the lead monomial calculations still apply.

\begin{Lem}  For $m=2+\lfloor 3j/(p-1)\rfloor$,
 $$\lm\left({\rm tr}_B^G\left(N^j{\rm tr}^{P}\left(a_2^{(m-1)(p-1)-3j}a_3^{p-1}\right)\right)\right)=a_3^{pj}a_2^{m(p-1)-3j}=:\gamma_j.$$
\label{gammafamily} \end{Lem}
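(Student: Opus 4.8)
The plan is to peel the expression apart from the outside in, reducing to a single inner lead-monomial computation and one application of Theorem~\ref{orbitdiff}. Write $a:=(m-1)(p-1)-3j$, set $h:=N^j\,\ptr(a_2^{a}a_3^{p-1})$ (so $\gamma_j=a_3^{pj}a_2^{m(p-1)-3j}$ is the target $\lm(h)$), and let $f:=N^{j-1}\,\ptr(a_2^{a}a_3^{p-1})$ so that $h=N\cdot f$. First I would record that $a\ge 1$: writing $3j=q(p-1)+r$ with $0\le r\le p-2$ gives $m-1=1+q$, hence $a=(p-1)-r\in\{1,\dots,p-1\}$, so the monomial $a_2^{a}a_3^{p-1}$ is legitimate and the problem splits into (i) computing $\lm(h)$ and (ii) checking that the outer relative transfer $\tr_B^G$ preserves the lead term.

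For (ii) I would invoke Theorem~\ref{orbitdiff} with this $f$. The hypotheses are routine: $f$ is a product of the isobaric $P$-invariants $N$ and $\ptr(a_2^{a}a_3^{p-1})$ (the latter isobaric by Lemma~\ref{Isobaricprodtransferlem}), so $f$ is an isobaric $P$-invariant; and since $\wt(N)=3$ and $\ptr$ preserves weight, $\wt(N\cdot f)=3j+a+3(p-1)=(m+2)(p-1)\equiv 0\pmod{p-1}$. Finally $a_0\nmid f$, because $a_0\nmid N$ (its $s=0$ factor is $a_3$, so in fact $\lm(N)=a_3^{p}$) and $a_0\nmid\ptr(a_2^{a}a_3^{p-1})$ (its lead monomial, computed below, is free of $a_0$, and no polynomial with an $a_0$-free lead monomial is divisible by $a_0$). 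Theorem~\ref{orbitdiff} then gives $\lt(\tr_B^G(N\cdot f))=\lt(N\cdot f)$, so the outer transfer leaves the lead term untouched and it suffices to find $\lm(h)$.

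For (i), multiplicativity of lead monomials gives $\lm(h)=\lm(N^j)\,\lm(\ptr(a_2^{a}a_3^{p-1}))=a_3^{pj}\,\lm(\ptr(a_2^{a}a_3^{p-1}))$, using $\lm(N)=a_3^{p}$. Everything therefore reduces to the inner claim $\lm(\ptr(a_2^{a}a_3^{p-1}))=a_2^{a+p-1}$; granting it, $a_3^{pj}a_2^{a+p-1}=a_3^{pj}a_2^{m(p-1)-3j}=\gamma_j$. This inner computation is exactly the kind carried out in \cite{Shank1998}, and it is where the genuine obstacle lies: each $\sigma^s$ acts unipotently with respect to $a_3>a_2>a_1>a_0$, so every term of $\ptr(a_2^{a}a_3^{p-1})=\sum_{s\in\fp}(a_2^{a}a_3^{p-1})\sigma^s$ has degree $a+p-1$ and the naive leading monomial $a_3^{p-1}a_2^{a}$, together with a whole band of larger monomials, cancels in characteristic $p$, so one must pinpoint the first surviving monomial.

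To settle (i) directly I would note that, in the graded reverse lexicographic order, a degree-$(a+p-1)$ monomial with a positive $a_0$- or $a_1$-exponent is automatically smaller than $a_2^{a+p-1}$, so the only monomials exceeding $a_2^{a+p-1}$ are $a_3^{e_3}a_2^{a+p-1-e_3}$ with $1\le e_3\le p-1$. Extracting the coefficient of such a monomial from $(a_2+2sa_1+s^2a_0)^{a}(a_3+3sa_2+3s^2a_1+s^3a_0)^{p-1}$ forces the full $a_2^{a}$ from the first factor and $a_3^{e_3}a_2^{p-1-e_3}$ from the second, yielding $\binom{p-1}{e_3}3^{p-1-e_3}\sum_{s\in\fp}s^{\,p-1-e_3}$; since $0\le p-1-e_3\le p-2$, the power sum vanishes and none of these larger monomials occurs, whereas the coefficient of $a_2^{a+p-1}$ is $3^{p-1}\sum_{s\in\fp}s^{\,p-1}\equiv -1\not\equiv 0$. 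Hence $\lm(\ptr(a_2^{a}a_3^{p-1}))=a_2^{a+p-1}$, which combined with (ii) gives $\lm(\tr_B^G(h))=\gamma_j$. The one substantive point is this characteristic-$p$ cancellation in the inner transfer; the weight bookkeeping and the reduction through Theorem~\ref{orbitdiff} are formal.
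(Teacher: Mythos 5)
Your proposal is correct, and its skeleton is the same as the paper's: verify that the inner exponent $a=(m-1)(p-1)-3j$ lies in $\{1,\dots,p-1\}$ (the paper unwinds the floor inequality $3j/(p-1)-1<m-2\le 3j/(p-1)$, you use division with remainder --- identical content), use Lemma~\ref{Isobaricprodtransferlem} for the isobaric/weight bookkeeping (your $\wt(N\cdot f)=3j+a+3(p-1)=(m+2)(p-1)\equiv 0$ modulo $p-1$ is exactly what Theorem~\ref{orbitdiff} requires), and pass the lead term through $\tr_B^G$ via Theorem~\ref{orbitdiff} after checking $a_0\nmid N^{j}\ptr(a_2^{a}a_3^{p-1})$; like the paper, you tacitly take $j\ge 1$ in writing $h=N\cdot f$, which is the range actually used. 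The one genuine divergence is the inner claim $\lm(\ptr(a_2^{a}a_3^{p-1}))=a_2^{a+p-1}$: the paper simply quotes \cite[3.3]{Shank1998}, whereas you reprove it from scratch, and your computation is sound --- in grevlex any degree-$(a+p-1)$ monomial exceeding $a_2^{a+p-1}$ is of the form $a_3^{e_3}a_2^{a+p-1-e_3}$ with $1\le e_3\le p-1$, its coefficient forces $a_2^{a}$ from $(a_2+2sa_1+s^2a_0)^{a}$ and equals $\binom{p-1}{e_3}3^{p-1-e_3}\sum_{s\in\fp}s^{p-1-e_3}$, which vanishes because $0\le p-1-e_3\le p-2$, while the coefficient of $a_2^{a+p-1}$ is $3^{p-1}\sum_{s\in\fp}s^{p-1}\equiv -1$. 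As for what each route buys: the citation keeps the proof to three lines and reuses the lead-monomial tables of \cite{Shank1998} uniformly (the companion Lemmas~\ref{pm1amod2}, \ref{Deltafam} and \ref{pm1amod4} rest on entries of that table whose direct verification would be substantially harder, since there the cancellation reaches past the $a_0,a_1$-free band); your version makes Lemma~\ref{gammafamily} self-contained, and as a by-product your argument never uses the upper bound $a\le p-1$, so for this particular family the range check is needed only to guarantee $a\ge 1$ --- the restriction $1\le b\le p-1$ is an artifact of the form of the cited statement rather than of the monomials at hand.
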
 \vspace{-0.9cm}
\begin{proof} We know from \cite[3.3]{Shank1998} that tr$^{P}(a_2^ba_3^{p-1})$ has lead monomial $a_2^{b+p-1}$ if $1 \leq b \leq p-1$. 
Since $m=2+\lfloor 3j/(p-1)\rfloor$, we have $3j/(p-1)-1<m-2\leq 3j/(p-1)$, which simplifies to
$0<(m-1)(p-1)-3j\leq p-1$. The result then follows from 
Lemma~\ref{Isobaricprodtransferlem} and Theorem~\ref{orbitdiff}.
\end{proof}

\begin{Lem} For $0\leq j\leq (p-1)/2$, 
\begin{equation*} \lm\left({\rm tr}_B^G\left(N^j{\rm tr}^{P}\left(a_3^{p-1-j}\right)\right)\right)=a_3^{pj}a_2^{p-1-2j}a_1^j=:\beta_j. \end{equation*}
\label{pm1amod2}\end{Lem}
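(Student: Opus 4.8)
The plan is to follow the strategy of Lemma~\ref{gammafamily}: reduce everything to a lead monomial computation for the $P$-transfer and then push it through $\tr_B^G$ via Theorem~\ref{orbitdiff}. The key ingredient is the identity
$$\lm\left(\ptr\left(a_3^{p-1-j}\right)\right)=a_2^{p-1-2j}a_1^{j}.$$
To prove it I would expand $\ptr(a_3^{p-1-j})=\sum_{s\in\field_p}(a_3+3sa_2+3s^2a_1+s^3a_0)^{p-1-j}$ by the multinomial theorem and sum over $s$, using $\sum_{s\in\field_p}s^k=-1$ when $(p-1)\mid k$ and $k\geq 1$, and $0$ otherwise. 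A monomial $a_3^{i_3}a_2^{i_2}a_1^{i_1}a_0^{i_0}$ of degree $p-1-j$ then survives precisely when $i_2+2i_1+3i_0$ is a positive multiple of $p-1$. In the graded reverse lexicographic order with $a_0<a_1<a_2<a_3$, the lead monomial is the surviving monomial that first minimizes $i_0$, then $i_1$; a short check shows the unique such monomial has $i_0=i_3=0$, $i_1=j$ and $i_2=p-1-2j$, where the hypothesis $j\leq(p-1)/2$ is exactly what forces $i_2\geq 0$. Its coefficient is $-\binomial{p-1-j}{j}3^{p-1-j}$, which is nonzero modulo $p$ because $p>3$ and $j\leq p-1-j<p$. (This lead monomial could alternatively be extracted from \cite{Shank1998}.)

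For $j\geq 1$ I would then write $N^j\ptr(a_3^{p-1-j})=N\cdot f$ with $f:=N^{j-1}\ptr(a_3^{p-1-j})$, an isobaric $P$-invariant. By Lemma~\ref{Isobaricprodtransferlem} the weight of $N\cdot f$ is $3j+3(p-1-j)=3(p-1)\equiv 0\pmod{p-1}$, so the weight hypothesis of Theorem~\ref{orbitdiff} holds. Since $\lm(N)=a_3^p$ and the lead monomial of $\ptr(a_3^{p-1-j})$ just found are both free of $a_0$, neither factor is divisible by $a_0$, so $a_0$ does not divide $f$. Theorem~\ref{orbitdiff} then guarantees that $\tr_B^G(N\cdot f)$ and $N\cdot f$ share a lead term, and multiplicativity of lead monomials gives
$$\lm\left(\tr_B^G\left(N^j\ptr\left(a_3^{p-1-j}\right)\right)\right)=\lm(N)^{j}\,\lm\left(\ptr\left(a_3^{p-1-j}\right)\right)=a_3^{pj}a_2^{p-1-2j}a_1^{j}=\beta_j.$$

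The case $j=0$ lies outside the reach of Theorem~\ref{orbitdiff}, since $\ptr(a_3^{p-1})$ is not a multiple of $N$, and this is where I expect the main obstacle. Here I would argue directly: by Lemma~\ref{Isobaricprodtransferlem}, $\ptr(a_3^{p-1})$ is isobaric of weight $3(p-1)\equiv 0$, hence $B$-invariant, so with the coset representatives of Lemma~\ref{Bcosetreps} one has $\tr_B^G(\ptr(a_3^{p-1}))=\ptr(a_3^{p-1})+\sum_{\tau\in Q\setminus\{1\}}(\ptr(a_3^{p-1}))\tau+(\ptr(a_3^{p-1}))\eta$. The identity term contributes lead monomial $a_2^{p-1}=\beta_0$, and the delicate point is to show that the $Q$- and $\eta$-translates neither exceed nor cancel it: $Q$ fixes $a_3$ and sends $a_0$ to an expression whose lead term is a multiple of $a_3$, while $\eta$ reverses the variable order, and one must track how $a_2^{p-1}$ transforms under these and confirm it survives as the overall lead monomial. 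An alternative I would keep in reserve for $j=0$ is to use transitivity of the transfer together with the weight-zero collapse $\tr^B_P(\ptr(a_3^{p-1}))=-\ptr(a_3^{p-1})$ (valid since $\rho_\omega$ acts trivially on a weight-zero invariant) to identify $\tr_B^G(\ptr(a_3^{p-1}))$ with $-\tr^G(a_3^{p-1})$, reducing the problem to the lead monomial of the full transfer of $a_3^{p-1}$.
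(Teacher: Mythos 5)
For $j\geq 1$ your argument is correct and is essentially the paper's proof: the paper simply cites \cite[3.2]{Shank1998} for the identity $\lm(\tr^{P}(a_3^{b}))=a_2^{2b-(p-1)}a_1^{p-1-b}$ for $(p-1)/2\leq b\leq p-1$ (your constraint $j\leq(p-1)/2$ is exactly $b=p-1-j\geq(p-1)/2$), and then invokes Lemma~\ref{Isobaricprodtransferlem} and Theorem~\ref{orbitdiff}, just as you do. Your direct derivation of that lead monomial is sound and makes the step self-contained: the survival condition ($i_2+2i_1+3i_0$ a positive multiple of $p-1$), the grevlex minimization of $i_0$ then $i_1$, and the coefficient $-\binom{p-1-j}{j}3^{p-1-j}\not\equiv_{(p)}0$ all check out, as do your verifications of the weight hypothesis and of $a_0\nmid N^{j-1}\tr^{P}(a_3^{p-1-j})$.

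The genuine issue is $j=0$, where you rightly observe that Theorem~\ref{orbitdiff} does not apply (the paper's proof silently passes over this), but you leave the case open, and in fact it cannot be closed: your first fallback --- showing the $Q$- and $\eta$-translates of $\tr^{P}(a_3^{p-1})$ neither exceed nor cancel the lead term $a_2^{p-1}$ --- is doomed, because they cancel it entirely. Your reserve reduction settles this in one more line, in the negative: as you note, $\tr_B^G(\tr^{P}(a_3^{p-1}))=-\tr^G(a_3^{p-1})$, and since $(a_3)\eta=a_0$ we get $\tr^G(a_3^{p-1})=\tr^G(a_0^{p-1})=\tr_B^G(\tr^B(a_0^{p-1}))$; but $a_0^{p-1}$ is $B$-invariant, so $\tr^B(a_0^{p-1})=|B|\,a_0^{p-1}=p(p-1)a_0^{p-1}=0$, whence $\tr_B^G(\tr^{P}(a_3^{p-1}))=0$. (Equivalently: the computation in the proof of Lemma~\ref{Klem} gives $\tr_B^G(a_0^{p-1})=a_0^{p-1}+\tr^{P}(a_3^{p-1})=-K$; applying $\tr_B^G$ again and using $\tr_B^G(K)=(p+1)K=K$ yields $\tr_B^G(\tr^{P}(a_3^{p-1}))=-K+K=0$.) So the $j=0$ case of the lemma is false as stated --- the polynomial is identically zero and has no lead monomial --- and the correct range is $1\leq j\leq(p-1)/2$. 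This misstatement is harmless downstream: Theorem~\ref{maintheorem} uses the corresponding transfer family only for $j\geq1$, and the monomial $\beta_0=a_2^{p-1}$ enters the SAGBI/Hilbert-series argument as $\lm(K)$ via Lemma~\ref{Klem}, not via this lemma. Your proposal should therefore either restrict to $j\geq1$ or record the vanishing at $j=0$ explicitly.
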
 \vspace{-1cm}
\begin{proof} From \cite[3.2]{Shank1998}, tr$^{P}(a_3^b)$ has lead monomial $a_2^{2b-(p-1)}a_1^{p-1-b}$ if $(p-1)/2 \leq b \leq p-1$. 
Simplifying $(p-1)/2\leq p-1-j\leq p-1$ gives $0\leq j \leq (p-1)/2$. The result then follows from 
Lemma~\ref{Isobaricprodtransferlem} and Theorem~\ref{orbitdiff}.
\end{proof}

\begin{Lem}  For $m=2+\lfloor 3j/(p-1)\rfloor$ and $j\not=\lceil(m-2)(p-1)/3\rceil$,
\begin{equation*} \lm\left({\rm tr}_B^G\left(N^j{\rm tr}^{P}\left(a_3^{p-2}a_2^{(m-1)(p-1)+3-3j}\right)\right)\right)=a_3^{pj}a_2^{m(p-1)+1-3j}a_1=:\Delta_j. \end{equation*}
\label{Deltafam} \end{Lem}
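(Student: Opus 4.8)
The plan is to follow the template of Lemmas~\ref{gammafamily} and~\ref{pm1amod2}: reduce to a lead-monomial computation for the inner transfer recorded in \cite{Shank1998}, convert the hypotheses on $m$ and $j$ into the exponent range on which that computation is valid, and then transport the result through $\tr_B^G$ via Lemma~\ref{Isobaricprodtransferlem} and Theorem~\ref{orbitdiff}. Throughout, write $c:=(m-1)(p-1)+3-3j$ for the exponent of $a_2$ appearing in the inner transfer.

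First I would invoke the lead-monomial computation from \cite{Shank1998} for $\tr^P(a_3^{p-2}a_2^c)$, which I expect to take the form $\lm(\tr^P(a_3^{p-2}a_2^c))=a_2^{c+p-3}a_1$, valid for $c$ in an interval whose right-hand endpoint is $p-1$. Since $\lm(N)=a_3^p$ and lead monomials multiply (as they do for any term order), this gives $\lm(N^j\tr^P(a_3^{p-2}a_2^c))=a_3^{pj}a_2^{c+p-3}a_1$; the identity $c+p-3=m(p-1)+1-3j$ then rewrites the right-hand side as $\Delta_j$.

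The heart of the argument is checking that the hypotheses place $c$ in the valid range. From $m=2+\lfloor 3j/(p-1)\rfloor$ I would extract $(m-2)(p-1)\le 3j<(m-1)(p-1)$, which yields $4\le c\le p+2$. The lower bound already lies well inside the admissible interval, so the only thing to exclude is $c\ge p$. A one-line manipulation shows that $c\le p-1$ is equivalent to $j\ge\lceil(m-2)(p-1)/3\rceil+1$, so the single forbidden value $j=\lceil(m-2)(p-1)/3\rceil$ is exactly the one producing $c\ge p$. I expect this boundary bookkeeping to be the main obstacle: one must confirm that the ceiling in the hypothesis matches the endpoint of the \cite{Shank1998} formula precisely, with no off-by-one error. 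The same exclusion also forces $j\ge 1$ --- for example, it removes $j=0$ when $m=2$ --- which is needed for the final step.

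Finally I would run the transfer step. By Lemma~\ref{Isobaricprodtransferlem}, $\tr^P(a_3^{p-2}a_2^c)$ is isobaric of the same weight as $a_3^{p-2}a_2^c$, so $N^j\tr^P(a_3^{p-2}a_2^c)$ is isobaric of weight $3j+3(p-2)+c\equiv(m+2)(p-1)\equiv 0\pmod{p-1}$ and hence is $B$-invariant. Put $f:=N^{j-1}\tr^P(a_3^{p-2}a_2^c)$, which is a genuine polynomial because $j\ge 1$; then $N\cdot f=N^j\tr^P(a_3^{p-2}a_2^c)$ has weight $0$, and $\lm(f)$ is free of $a_0$ since both $\lm(N)=a_3^p$ and $a_2^{c+p-3}a_1$ are, so $a_0$ does not divide $f$. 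Theorem~\ref{orbitdiff} now applies and gives $\lm(\tr_B^G(N^j\tr^P(a_3^{p-2}a_2^c)))=\lm(N^j\tr^P(a_3^{p-2}a_2^c))=\Delta_j$, as required.
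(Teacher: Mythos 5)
Your proposal is correct and follows essentially the same route as the paper's proof: citing \cite[3.4]{Shank1998} for $\lm(\tr^P(a_3^{p-2}a_2^b))=a_2^{b+p-3}a_1$ on $2\leq b\leq p-1$, deriving $4\leq c\leq p+2$ from the definition of $m$, identifying $j=\lceil(m-2)(p-1)/3\rceil$ as the unique integer value producing $c\in\{p,p+1,p+2\}$, and finishing via Lemma~\ref{Isobaricprodtransferlem} and Theorem~\ref{orbitdiff}. Your only divergences are cosmetic --- you phrase the exclusion as the equivalence $c\leq p-1\iff j\geq\lceil(m-2)(p-1)/3\rceil+1$ rather than listing the three forbidden values of $3j$, and you spell out the hypotheses of Theorem~\ref{orbitdiff} (the weight computation, $j\geq 1$, and $a_0\nmid f$) that the paper leaves implicit.
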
 \vspace{-0.9cm}
\begin{proof} Using \cite[3.4]{Shank1998}, $\lm(\tr^{P}(a_3^{p-2}a_2^b))=a_2^{b+p-3}a_1$ for $2 \leq b \leq p-1$. As in the proof of 
Lemma~\ref{gammafamily}, we have
$0<(m-1)(p-1)-3j\leq p-1$. Therefore $3<(m-1)(p-1)+3-3j\leq p+2$. Thus the lead monomial calculation is valid as long as 
$(m-1)(p-1)+3-3j\not\in\{p,p+1,p+2\}$. This simplifies to $j\not\in\{(m-2)(p-1)/3 +\varepsilon/3\mid\varepsilon\in\{0,1,2\}\}$, i.e.,
$j\not=\lceil(m-2)(p-1)/3\rceil$.
 The result then follows from 
Lemma~\ref{Isobaricprodtransferlem} and Theorem~\ref{orbitdiff}.
\end{proof}

\begin{Lem} For $p\equiv_{(3)}-1$ and $j=(2p-1)/3,\ldots,p-2$,
\begin{equation*} \lm\left({\rm tr}_B^G\left(N^j{\rm tr}^{P}\left(a_3^{\frac{5p-7}{3}-j}a_2^2\right)\right)\right)=a_3^{pj}a_2^{\frac{7p-5}{3}-2j}a_1^{j-\frac{2p-4}{3}}=:\phi_j.\end{equation*}
\label{pm1amod4} \end{Lem}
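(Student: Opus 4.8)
The plan is to follow the same two-step template used in Lemmas~\ref{gammafamily}, \ref{pm1amod2} and \ref{Deltafam}: first read off the lead monomial of the inner transfer $\tr^P(a_3^c a_2^2)$ from the lead-monomial calculations of \cite{Shank1998}, where I abbreviate $c:=\frac{5p-7}{3}-j$, and then promote this to the lead monomial of $\tr_B^G(N^j\tr^P(a_3^c a_2^2))$ using Lemma~\ref{Isobaricprodtransferlem} together with Theorem~\ref{orbitdiff}. Concretely, the relevant entry in the tables of \cite{Shank1998} gives
\begin{equation*}
\lm\left(\tr^P\left(a_3^c a_2^2\right)\right)=a_2^{2c-(p-1)+2}\,a_1^{(p-1)-c},
\end{equation*}
valid on the range $\frac{p-3}{2}\le c\le p-1$ on which both exponents are non-negative and the computed lead monomial is free of $a_0$; as a sanity check this specialises at $c=p-1$ to $a_2^{p+1}$, agreeing with \cite[3.3]{Shank1998}. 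Note that the hypothesis $p\equiv_{(3)}-1$ guarantees that $\frac{5p-7}{3}$ and $\frac{2p-1}{3}$ are integers, so $c$ and the claimed exponents of $\phi_j$ are genuinely integral.

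Next I would carry out the range bookkeeping. As $j$ runs over $(2p-1)/3,\ldots,p-2$, the integer $c=\frac{5p-7}{3}-j$ runs over the interval $[\frac{2p-1}{3},\,p-2]$. Since $p>3$ gives $\frac{2p-1}{3}>\frac{p-3}{2}$, and since $p-2<p-1$, this interval lies strictly inside the range of validity above, so the cited formula applies for every admissible $j$ and, in contrast with Lemma~\ref{Deltafam}, there is no exceptional value to exclude. Substituting $c=\frac{5p-7}{3}-j$ into the two exponents converts $a_2^{2c-(p-1)+2}a_1^{(p-1)-c}$ into $a_2^{\frac{7p-5}{3}-2j}a_1^{\,j-\frac{2p-4}{3}}$, which is the non-$a_3$ part of $\phi_j$.

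Finally I would verify the hypotheses of Theorem~\ref{orbitdiff} and assemble the conclusion. Writing $f:=N^{j-1}\tr^P(a_3^c a_2^2)$ (legitimate since $j\ge(2p-1)/3\ge 1$), the polynomial $f$ is an isobaric $P$-invariant by Lemma~\ref{Isobaricprodtransferlem}, and a direct weight count gives
\begin{equation*}
\wt\left(N^{\,j}\tr^P\left(a_3^c a_2^2\right)\right)=3j+(3c+2)=5(p-1)\equiv_{(p-1)}0,
\end{equation*}
so $\wt(N\cdot f)=0$. Moreover $a_0$ divides neither $N$ (its $s=0$ orbit factor is $a_3$) nor $\tr^P(a_3^c a_2^2)$ (whose lead monomial involves only $a_2$ and $a_1$), and since $(a_0)$ is a prime ideal $a_0$ does not divide the product $f$ either. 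Theorem~\ref{orbitdiff} then equates the lead terms of $\tr_B^G(N^j\tr^P(a_3^c a_2^2))$ and $N^j\tr^P(a_3^c a_2^2)$, so by multiplicativity of the term order the lead monomial is $\lm(N)^j\cdot\lm(\tr^P(a_3^c a_2^2))=a_3^{pj}\,a_2^{\frac{7p-5}{3}-2j}a_1^{\,j-\frac{2p-4}{3}}=\phi_j$, as claimed. The only real obstacle is the first step: confirming that the inner lead monomial is exactly $a_2^{2c-(p-1)+2}a_1^{(p-1)-c}$ uniformly across the stated $c$-range, i.e.\ that no carry into $a_0$ and no collapse of the $a_2$ or $a_1$ exponent occurs at the endpoints. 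The matching degree ($c+2$) and weight computations, together with the boundary agreement at $c=p-1$ with \cite[3.3]{Shank1998}, are the checks I would use to pin this down.
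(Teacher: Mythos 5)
Your proposal is correct and follows essentially the same route as the paper: read off $\lm(\tr^P(a_3^c a_2^2))=a_2^{2c-p+3}a_1^{p-1-c}$ from \cite[3.5]{Shank1998}, verify that $c=\frac{5p-7}{3}-j$ stays within the valid range as $j$ runs over $(2p-1)/3,\ldots,p-2$, and conclude via Lemma~\ref{Isobaricprodtransferlem} and Theorem~\ref{orbitdiff}. Your explicit checks of the weight-zero and $a_0$-nondivisibility hypotheses of Theorem~\ref{orbitdiff} are left implicit in the paper, and your lower bound $(p-3)/2$ for the validity range is in fact the one consistent with the paper's own simplification to $j\leq (7p-5)/6$ (the printed bound $(p-2)/3$ would instead give $j\leq(4p-5)/3$ and a negative $a_2$-exponent near that endpoint, so your version reads as the intended one).
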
 \vspace{-0.9cm}
\begin{proof}
From \cite[3.5]{Shank1998}, $\lm(\tr^{P}(a_3^ba_2^2))=a_2^{2b-p+3}a_1^{p-1-b}$ for $(p-2)/{3}\leq b \leq p-1$.
The inequalities $(p-2)/{3}\leq (5p-7)/3 -j \leq p-1$ simplify to
$(2p-4)/3\leq j \leq (7p-5)/6=p-1+(p+1)/6$. Thus the lead monomial calculation is valid for the given range of $j$.
The result then follows from 
Lemma~\ref{Isobaricprodtransferlem} and Theorem~\ref{orbitdiff}.
\end{proof}

Define $\xi=3a_2^2-4a_3a_1$.

\begin{Lem}  $K=-\tr^P(a_3^{p-1})-a_0^{p-1} \equiv_{(a_0)}(3\xi)^{\frac{p-1}{2}}+a_1^{p-1}$.\label{Klem}
\end{Lem}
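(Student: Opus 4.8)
The plan is to re-express the global transfer defining $K$ in terms of $\tr^P$, and then to work modulo $a_0$. Write $K=-\tr^G(a_1^{p-1})$ and use transitivity of the transfer, $\tr^G=\tr^G_B\circ\tr^B$ with $\tr^B=\tr^B_P\circ\tr^P$. First I would compute $\tr^P(a_1^{p-1})$ directly: since the element of $P$ with parameter $s$ sends $a_1\mapsto a_1+sa_0$, expanding $\sum_{s\in\field_p}(a_1+sa_0)^{p-1}$ by the binomial theorem and using $\sum_{s\in\field_p}s^m=-1$ when $(p-1)\mid m$ and $m>0$ (and $0$ otherwise) collapses the sum to $-a_0^{p-1}$. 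As $a_0^{p-1}$ has weight $-3(p-1)\equiv 0$ it is fixed by the torus, so summing over the $p-1$ torus coset representatives multiplies $-a_0^{p-1}$ by $p-1\equiv-1$, giving $\tr^B(a_1^{p-1})=a_0^{p-1}$ and hence $\tr^G(a_1^{p-1})=\tr^G_B(a_0^{p-1})$.

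Next I would evaluate $\tr^G_B(a_0^{p-1})$ with the coset representatives $Q\cup\{\eta\}$ of Lemma~\ref{Bcosetreps}. Since $a_0^{p-1}$ is $B$-invariant and $(a_0)\eta=-a_3$ with $p-1$ even, this gives $\tr^G(a_1^{p-1})=\tr^Q(a_0^{p-1})+a_3^{p-1}$. The key move is then to exploit that $\tr^G_B(a_0^{p-1})\in\field[V]^G$ is fixed by $\eta$. Applying $\eta$ to the right-hand side and using that $\eta$ interchanges $P$ and $Q$ (since $\eta^{-1}\sigma\eta=\sigma^T$ and $\eta^2$ is central), together with $(a_0)\eta=-a_3$ and $(a_3)\eta=a_0$, converts $\tr^Q(a_0^{p-1})$ into $\tr^P(a_3^{p-1})$ and $a_3^{p-1}$ into $a_0^{p-1}$; the sign from $(a_0)\eta=-a_3$ is killed by the even exponent. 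This yields $\tr^G(a_1^{p-1})=\tr^P(a_3^{p-1})+a_0^{p-1}$, i.e.\ the first equality $K=-\tr^P(a_3^{p-1})-a_0^{p-1}$.

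For the congruence modulo $a_0$ I would reduce $\tr^P(a_3^{p-1})=\sum_{s}(a_3+3sa_2+3s^2a_1+s^3a_0)^{p-1}$ to $\sum_{s}(a_3+3sa_2+3s^2a_1)^{p-1}\pmod{a_0}$ and expand by the multinomial theorem. Writing the total $s$-degree of a term $a_3^{i}(3sa_2)^{j}(3s^2a_1)^{k}$ as $j+2k$, only the degrees $j+2k=p-1$ (forcing $i=k$, $j=p-1-2k$, $0\le k\le(p-1)/2$) and $j+2k=2(p-1)$ (forcing $i=j=0$, $k=p-1$) are positive multiples of $p-1$, so each surviving term is weighted by $-1$. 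After simplifying powers of $3$ via $3^{p-1}=1$ this produces
\[
\tr^P(a_3^{p-1})\equiv -\sum_{k=0}^{(p-1)/2}\binom{p-1}{2k}\binom{2k}{k}3^{-k}(a_3a_1)^k a_2^{p-1-2k}-a_1^{p-1}\pmod{a_0}.
\]

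Finally I would match this sum with $(3\xi)^{(p-1)/2}$. Expanding $(3\xi)^{(p-1)/2}=3^{(p-1)/2}(3a_2^2-4a_3a_1)^{(p-1)/2}$ by the binomial theorem gives $\sum_k\binom{(p-1)/2}{k}(-4)^k3^{-k}(a_3a_1)^k a_2^{p-1-2k}$. The main obstacle, and the arithmetic heart of the proof, is the pair of congruences $\binom{p-1}{2k}\equiv1$ and $\binom{2k}{k}\equiv(-4)^k\binom{(p-1)/2}{k}\pmod p$, the latter following from $\binom{-1/2}{k}=(-1/4)^k\binom{2k}{k}$ and $(p-1)/2\equiv-1/2\pmod p$. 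These identify the surviving sum with $(3\xi)^{(p-1)/2}$, so that $-\tr^P(a_3^{p-1})\equiv(3\xi)^{(p-1)/2}+a_1^{p-1}$, and since $a_0^{p-1}\equiv0\pmod{a_0}$ the claimed congruence $K\equiv(3\xi)^{(p-1)/2}+a_1^{p-1}$ follows.
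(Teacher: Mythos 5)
Your proof is correct and takes essentially the same route as the paper: compute $\tr^P(a_1^{p-1})=-a_0^{p-1}$, pass to $\tr_B^G(a_0^{p-1})$ via the weight-zero torus argument, identify this with $a_0^{p-1}+\tr^P(a_3^{p-1})$, and finish with the same multinomial expansion, power-sum evaluation over $\field_p$, and the congruence $\binom{2k}{k}\equiv_{(p)}(-4)^k\binom{\frac{p-1}{2}}{k}$. The only (cosmetic) divergence is the middle step, where you deduce $\tr^Q(a_0^{p-1})+a_3^{p-1}=\tr^P(a_3^{p-1})+a_0^{p-1}$ from $\eta$-equivariance of the $G$-invariant $\tr_B^G(a_0^{p-1})$ --- note, incidentally, that $\eta^{-1}\sigma\eta=(\sigma^T)^{-1}$ rather than $\sigma^T$, which still gives $\eta^{-1}P\eta=Q$, so your conclusion stands --- whereas the paper re-indexes the sum over $Q$ directly via $s\mapsto s^{-1}$, factoring out $s^{3(p-1)}=1$.
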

\begin{proof} A simple calculation gives $\tr^P(a_1^{p-1})=-a_0^{p-1}$ (or see \cite[3.2]{Shank1998}). Since $\wt(a_0^{p-1})=0$ and the index of $P$ in $B$ is $p-1$,
we have $\tr^B(a_1^{p-1})=a_0^{p-1}$. Using the coset representatives from Lemma~\ref{Bcosetreps} gives
\begin{eqnarray*}
-K&=&{\rm tr}^G(a_1^{p-1})={\rm tr}_B^G(a_0^{p-1})=((a_0)\eta)^{p-1}+{\rm tr}^Q(a_0^{p-1})=a_3^{p-1}+{\rm tr}^Q(a_0^{p-1})\\
&=&a_3^{p-1}+\sum_{s\in\field_p}(s^3a_3+3s^2a_2+3sa_1+a_0)^{p-1}\\
&=&a_3^{p-1}+a_0^{p-1}+\sum_{s\in\field_p^*}(s^3a_3+3s^2a_2+3sa_1+a_0)^{p-1}\\
&=&a_3^{p-1}+a_0^{p-1}+\sum_{s\in\field_p^*}s^{3(p-1)}(a_3+3s^{-1}a_2+3(s^{-1})^2a_1+(s^{-1})^3a_0)^{p-1}\\
&=&a_0^{p-1}+\sum_{t\in\field_p}(a_3+3ta_2+3t^2a_1+t^3a_0)^{p-1}=a_0^{p-1}+{\rm tr}^P(a_3^{p-1})\\
&\equiv_{(a_0)}&\sum_{t\in\field_p}(a_3+3ta_2+3t^2a_1)^{p-1}\\
&\equiv_{(a_0)}&\sum_{t\in\field_p}\sum_{a+b+c=p-1}\binom{p-1}{a,b,c}t^{b+2c}a_3^a(3a_2)^b(3a_1)^c.
\end{eqnarray*}
It is well known that $\sum_{t\in\field_p}t^i$ is $-1$ if $i$ is a positive multiple of $p-1$ and $0$ otherwise.
Thus, for $a,b,c$ non-negative with $a+b+c=p-1$, we see that $\sum_{t\in\field_p}t^{b+2c}$ is non-zero only when $b+2c=p-1$ or $b+2c=2(p-1)$.
If $b+2c=2(p-1)$ then $c=p-1$ and $a=b=0$. If $b+2c=p-1$ then $a=c$. Therefore
\begin{eqnarray*}
-K&\equiv_{(a_0)}& \binom{p-1}{0,0,p-1}(-1)(3a_1)^{p-1}-\sum_{c=0}^{\frac{p-1}{2}}\binom{p-1}{c,b,c}(3a_2)^{p-1-2c}(3a_1a_3)^c\\
-K&\equiv_{(a_0)}& -a_1^{p-1}-3^{\frac{p-1}{2}}\sum_{c=0}^{\frac{p-1}{2}}\binom{p-1}{c,b,c}(3a_2^2)^{\frac{p-1}{2}-c}(a_1a_3)^c.
\end{eqnarray*}
Simplifying binomial coefficients modulo $p$ gives
$$\binom{p-1}{c,p-1-2c,c}=\binom{2c}{c}=(-4)^c\binom{\frac{p-1}{2}}{c}.$$
Thus 
$$K\equiv_{(a_o)}a_1^{p-1}+3^{\frac{p-1}{2}}(3a_2^2-4a_1a_3)^{\frac{p-1}{2}},$$
as required.
\end{proof}

A similar calculation using the identity
$$\binom{p-2}{a,p-3-2a,a+1}\equiv_{(p)}-2(a+1)\binom{2a+1}{a}\equiv_{(p)}-2(-4)^a\binom{\frac{p-3}{2}}{a}$$
gives the following lemma.

\begin{Lem}$\tr^P(a_3^{p-2})\equiv_{(a_0)} 6a_1(3\xi)^{\frac{p-3}{2}}$.
\label{Ptrlem}
\end{Lem}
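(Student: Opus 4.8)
The plan is to mirror the computation carried out for $K$ in the proof of Lemma~\ref{Klem}. First I would write $\tr^P(a_3^{p-2})$ as a sum over the orbit $a_3P$ recorded in the proof of Lemma~\ref{factor},
\begin{equation*}
\tr^P(a_3^{p-2})=\sum_{s\in\field_p}(a_3+3sa_2+3s^2a_1+s^3a_0)^{p-2},
\end{equation*}
and reduce modulo $a_0$. Expanding the resulting $\sum_{t\in\field_p}(a_3+3ta_2+3t^2a_1)^{p-2}$ multinomially gives
\begin{equation*}
\tr^P(a_3^{p-2})\equiv_{(a_0)}\sum_{t\in\field_p}\sum_{a+b+c=p-2}\binom{p-2}{a,b,c}t^{b+2c}a_3^a(3a_2)^b(3a_1)^c,
\end{equation*}
after which I would invoke the same standard fact used for $K$, that $\sum_{t\in\field_p}t^i$ equals $-1$ when $i$ is a positive multiple of $p-1$ and $0$ otherwise.

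The point where the argument genuinely departs from the $K$ computation is a range estimate on the exponent $i=b+2c$. Because $a+b+c=p-2$, one has $b+2c\leq 2(p-2)=2(p-1)-2<2(p-1)$, so the only positive multiple of $p-1$ that $b+2c$ can attain is $p-1$ itself; in particular there is no analogue of the isolated boundary term $(3a_1)^{p-1}$ that occurred in Lemma~\ref{Klem}. Solving $b+2c=p-1$ together with $a+b+c=p-2$ forces $c=a+1$ and $b=p-3-2a$, with $a$ running over $0,\dots,(p-3)/2$, so that
\begin{equation*}
\tr^P(a_3^{p-2})\equiv_{(a_0)}-\sum_{a=0}^{(p-3)/2}\binom{p-2}{a,p-3-2a,a+1}\,3^{p-2-a}a_3^aa_2^{p-3-2a}a_1^{a+1}.
\end{equation*}

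Next I would substitute the supplied congruence $\binom{p-2}{a,p-3-2a,a+1}\equiv_{(p)}-2(-4)^a\binom{(p-3)/2}{a}$, turning the coefficient of the $a$-th term into $2(-4)^a\binom{(p-3)/2}{a}3^{p-2-a}$. Finally I would expand the target $6a_1(3\xi)^{(p-3)/2}$, writing $3\xi=9a_2^2-12a_1a_3$, by the binomial theorem; its $a$-th term carries the coefficient $6\binom{(p-3)/2}{a}9^{(p-3)/2-a}(-12)^a$ on $a_3^aa_2^{p-3-2a}a_1^{a+1}$. Since the binomial factor $\binom{(p-3)/2}{a}$ is common to both sides, the lemma reduces to the elementary identity $2(-4)^a3^{p-2-a}=6\cdot 9^{(p-3)/2-a}(-12)^a$, which follows at once from $9=3^2$ and $-12=-4\cdot 3$. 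The main obstacle is purely bookkeeping: keeping the powers of $3$, the signs, and the index shift $c=a+1$ consistent so that the two expansions agree term by term. The only substantive combinatorial input is the congruence for the multinomial coefficient, whose verification (via $\binom{p-2}{a,p-3-2a,a+1}=\binom{p-2}{a+1}\binom{p-3-a}{a}$ and reduction of each factor modulo $p$) runs exactly parallel to the one underlying Lemma~\ref{Klem}.
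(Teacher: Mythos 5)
Your proof is correct and is precisely the ``similar calculation'' the paper intends: it mirrors the expansion in Lemma~\ref{Klem}, correctly observes that $b+2c\leq 2(p-2)<2(p-1)$ eliminates the boundary term $(3a_1)^{p-1}$ so only $b+2c=p-1$ survives (forcing $c=a+1$, $b=p-3-2a$), and invokes the paper's stated congruence for $\binom{p-2}{a,p-3-2a,a+1}$. Your concluding integer identity $2(-4)^a3^{p-2-a}=6\cdot 9^{(p-3)/2-a}(-12)^a$ checks out exactly, so the term-by-term match with the expansion of $6a_1(3\xi)^{\frac{p-3}{2}}$ is complete.
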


\begin{The}\label{hsop} The set $\{D,K,Na_0,\delta\}$ is a homogeneous system of parameters.
\end{The}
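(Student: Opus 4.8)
The plan is to convert the assertion into a single statement about an affine variety. Because $G$ is finite, $\field[V]$ is a finite module over $\fieldvg$, so $\fieldvg$ is a finite module over $A:=\field[D,K,Na_0,\delta]$ if and only if $\field[V]$ is; and for four homogeneous elements of positive degree in the four-dimensional space $V$, this finiteness is equivalent to the common zero locus $Z$ of $D,K,Na_0,\delta$ in $V\otimes_\field\overline{\field}$ being $\{0\}$. Once $Z=\{0\}$ is known, the four elements form a system of parameters in the Cohen--Macaulay ring $\field[V]$, hence a regular sequence, hence are algebraically independent, and the theorem follows. So it suffices to prove $Z=\{0\}$. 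I would work over $\overline{\field}$ and identify a point $z$ with the binary cubic $C_z=a_0X^3+3a_1X^2Y+3a_2XY^2+a_3Y^3$ read off from its coordinates; here $z\mapsto C_z$ is a $G$-equivariant linear isomorphism, so $z\neq0$ iff $C_z\neq0$, and $Z$ is a $G$-stable cone since all four invariants are $G$-invariant (Corollary~\ref{prodinv} for $Na_0$).

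The crucial step is to read $Na_0$ and $\delta$ as functions on cubics. The $P$-orbit of $a_3$ consists of the values $a_3+3sa_2+3s^2a_1+s^3a_0=C_z(s,1)$ for $s\in\field_p$, while $a_0=C_z(1,0)$; thus $N(z)=\prod_{s\in\field_p}C_z(s,1)$ and $Na_0(z)$ is the product of $C_z$ over all $\field_p$-rational points of $\mathbb{P}^1$ (compare Lemma~\ref{factor}), for any fixed choice of representatives. Using the coset representatives $Q\cup\{\eta\}$ of Lemma~\ref{Bcosetreps}, each $(N^c)g$ evaluates to a nonzero scalar multiple of $\big(\prod_{P\neq R}C_z(P)\big)^c$, where $R=g\cdot\infty$ (with $\infty=[1:0]$) runs over $\mathbb{P}^1(\field_p)$; hence $\delta(z)=\sum_{R\in\mathbb{P}^1(\field_p)}\epsilon_R\big(\prod_{P\neq R}C_z(P)\big)^c$ with all $\epsilon_R\neq0$. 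If $z\in Z$ is nonzero, then $Na_0(z)=0$ forces $C_z(R_0)=0$ for some $R_0\in\mathbb{P}^1(\field_p)$; were $R_0$ the only $\field_p$-rational zero of $C_z$, every term of $\delta(z)$ except the $R=R_0$ one would carry the factor $C_z(R_0)=0$, leaving $\delta(z)=\epsilon_{R_0}\big(\prod_{P\neq R_0}C_z(P)\big)^c\neq0$. Hence $C_z$ has at least two distinct zeros in $\mathbb{P}^1(\field_p)$.

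Finally I would bring in $D$ and $K$. Since $D$ is the discriminant, $D(z)=0$ means $C_z$ has a repeated root; a nonzero cubic with a repeated root and at least two distinct $\field_p$-rational roots must equal $\lambda L_0^2L_1$ with $L_0\neq L_1$ both $\field_p$-rational (three distinct roots would leave no repetition, while a perfect cube has only one distinct root). As $G$ acts $2$-transitively on $\mathbb{P}^1(\field_p)$ and $Z$ is $G$-stable, I may replace $z$ by a translate sending the double root to $[0:1]$ and the simple root to $[1:0]$, so that $C_z$ is a scalar multiple of $X^2Y$, i.e.\ $z=(0,\mu,0,0)$. At such a point $a_0=0$ and $\xi=3a_2^2-4a_3a_1=0$, so Lemma~\ref{Klem} gives $K(z)\equiv_{(a_0)}(3\xi)^{(p-1)/2}+a_1^{p-1}=\mu^{p-1}$; then $K(z)=0$ forces $\mu=0$, whence $C_z=0$ and $z=0$, a contradiction. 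This yields $Z=\{0\}$.

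The routine parts are the two orbit identities for $Na_0$ and $\delta$ and the one-line evaluation of $K$, all immediate from Lemmas~\ref{factor}, \ref{Bcosetreps} and \ref{Klem}. I expect the main obstacle to be the deduction of ``at least two distinct rational roots'' from the vanishing of $Na_0$ and $\delta$: this requires checking that the constants $\epsilon_R$ are genuinely nonzero, so that the single surviving term of $\delta(z)$ cannot vanish, together with the observation that although $C_z(R)$ depends on the chosen representative of $R$, its vanishing does not, so that all the root-counting statements are well defined.
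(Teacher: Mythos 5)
Your proof is correct, but it takes a genuinely different route from the paper's. Both arguments reduce the theorem to showing that the common zero locus of $D,K,Na_0,\delta$ over $\overline{\field}$ is the origin, both begin by using $Na_0(v)=0$ to produce a rational linear factor of the cubic, and both finish with Lemma~\ref{Klem}; in between they diverge. The paper, having translated $v$ so that $a_0(v)=0$, stays in coordinates: the combination $a_1^2K-3(3\xi)^{\frac{p-3}{2}}D\equiv_{(a_0)}a_1^{p+1}$ forces $a_1(v)=0$, and then grevlex lead-monomial arguments finish the job --- since $\lm(K)=a_2^{p-1}$ and $\lm(\delta)=a_3^{pc}$, every other monomial of $K$ (resp.\ $\delta$) is divisible by a smaller variable, so once $a_0,a_1$ (resp.\ $a_0,a_1,a_2$) vanish at $v$, $K(v)$ and $\delta(v)$ reduce to nonzero multiples of $a_2(v)^{p-1}$ and $a_3(v)^{pc}$. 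Notably the paper uses $\delta$ only through its lead term (supplied by Theorem~\ref{orbitdiff}), so its proof is cheap given the SAGBI infrastructure already in place. You instead exploit the full orbit-sum structure of $\delta=\tr_B^G(N^c)$: writing $\delta(z)=\sum_R\epsilon_R\bigl(\prod_{P\neq R}C_z(P)\bigr)^c$ over $R\in\mathbb{P}^1(\field_p)$ and observing that a unique rational zero $R_0$ would leave exactly one surviving nonzero term, you conclude $C_z$ has at least two distinct rational zeros; then $D=0$ forces the shape $L_0^2L_1$ with both factors rational, two-transitivity of $SL_2(\field_p)$ on $\mathbb{P}^1(\field_p)$ normalises to $X^2Y$, and Lemma~\ref{Klem} with $a_0=\xi=0$ gives $K(z)=a_1(z)^{p-1}\neq0$, a contradiction. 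The two points of care you flag are the right ones and both check out: the representative ambiguity in $C_z(P)$ disappears completely upon raising to the $c$-th power, since changing representative multiplies $C_z(P)$ by $t^3$ and $t^{3c}=1$ by the definition of $c$ (indeed, with fixed representatives one can take all $\epsilon_R=1$, by the same computation that produces the sign in Lemma~\ref{factor}); and the coset representatives of Lemma~\ref{Bcosetreps} biject with the omitted points $R$ because $B$ is the stabiliser of the one rational point missing from the orbit defining $N$. What your route buys is conceptual transparency --- it is term-order free and explains geometrically why $(Na_0,\delta)$ cuts out exactly the cubics with at least two distinct rational roots --- at the cost of extra inputs (two-transitivity, the discriminant-root analysis) that the paper's leading-term bookkeeping avoids.
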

\begin{proof}
With out loss of generality, we may assume $\field$ is algebraically closed.
We will show that the variety associated to $(D,K,Na_0,\delta)\field[V]$,
say $\mcV$, consists of the zero vector.

Suppose $v\in\V$. Since $Na_0(v)=0$, there exits $g\in SL_2(\field_p)$ such that
$a_0g(v)=0$. Replacing $v$ with $g(v)$ if necessary, we may assume $a_0(v)=0$.
Note that $D\equiv_{(a_0)}a_1^2\xi$. From Lemma~\ref{Klem}, $K\equiv_{(a_0)}(3\xi)^{\frac{p-1}{2}}+a_1^{p-1}$.
Thus $a_1^2K-3(3\xi)^{\frac{p-3}{2}}D\equiv_{(a_0)}a_1^{p+1}$. Therefore $a_1(v)=0$.
Since $\lm(K)=a_2^{p-1}$ in the grevlex order, we have $a_2(v)=0$.
Since $\lm(\delta)=a_3^{pc}$, we have $a_3(v)=0$. Therefore $v$ is the zero vector.
\end{proof}

If $f$ and $h$ are polynomials with $\lt(f)=\lt(h)$, we refer to $f-h$ as a  t\^{e}te-\`{a}-t\^{e}tes 
(see \cite{rs} or \cite{Shank1998}).

\begin{The} There is an infinite family of t\^{e}te-\`{a}-t\^{e}tes in $\mathbb{F}[V]^{SL_2(\mathbb{F}_p)}$, defined as follows:
\begin{eqnarray*} h_1&=&K\cdot {\rm tr}_B^{SL_2(\mathbb{F}_p)}(Ne)-D\cdot{\rm tr}_B^{SL_2(\mathbb{F}_p)}(N{\rm tr}^{P}(a_3^{p-2})), \\
h_2&=&K\cdot h_1-(3D)^{\frac{p-1}{2}}\cdot {\rm tr}_B^{SL_2(\mathbb{F}_p)}(Ne), \\
h_i&=&K\cdot h_{i-1} - (3D)^{\frac{p-1}{2}}\cdot h_{i-2} \;\, {\rm for} \, i \geq 3, \end{eqnarray*}
with $\lt(h_i)=2a_3^pa_1^{p+2+(i-1)(p-1)}$ for $i\geq1$.
\label{inffamthm} \end{The}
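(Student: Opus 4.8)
My plan is to establish three claims about each $h_i$: that $h_i\in\field[V]^G$, that $h_i$ is a genuine t\^ete-\`a-t\^ete (the two summands in its defining expression share a lead term), and that the surviving lead term is $2a_3^pa_1^{p+2+(i-1)(p-1)}$. Invariance I would dispatch first and quickly. The discriminant $D$ and $K$ are $G$-invariant, and $Ne$ and $N\tr^P(a_3^{p-2})$ are products of $P$-invariants that are isobaric of weight $3+(-3)=0$, hence $B$-invariant by the weight-zero description of $\field[V]^B$ and Lemma~\ref{Isobaricprodtransferlem}; so $\tr_B^G(Ne)$ and $\tr_B^G(N\tr^P(a_3^{p-2}))$ are defined and $G$-invariant. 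As each $h_i$ is a polynomial in these invariants, induction gives $h_i\in\field[V]^G$.

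For the t\^ete-\`a-t\^ete property I would next record the lead terms of the building blocks. Lemma~\ref{Klem} gives $\lt(K)=a_2^{p-1}$, and $\lt(D)=3a_1^2a_2^2$. Since $a_0$ does not divide $e$, with $\lt(e)=2a_1^3$, and $a_0$ does not divide $\tr^P(a_3^{p-2})$, with lead term read from Lemma~\ref{Ptrlem}, Theorem~\ref{orbitdiff} yields $\lt(\tr_B^G(Ne))=\lm(N)\lt(e)=2a_3^pa_1^3$ and $\lt(\tr_B^G(N\tr^P(a_3^{p-2})))=\lm(N)\lt(\tr^P(a_3^{p-2}))$. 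A short computation, using $3^{p-1}\equiv1\pmod p$, shows the two summands defining $h_1$ both have lead term $2a_3^pa_1^3a_2^{p-1}$; the analogous check for the recurrence is handled below. Thus each $h_i$ is a t\^ete-\`a-t\^ete, and the real content of the theorem is the value of the lead term after this cancellation.

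The lead-term formula I would obtain by reducing modulo $a_0$, which is the smallest variable in the grevlex order. The proof of Theorem~\ref{orbitdiff} shows $\tr_B^G(Nf)\equiv Nf\pmod{a_0}$, so modulo $a_0$ the transfers are replaced by $Ne$ and $N\tr^P(a_3^{p-2})$. Setting $h_0:=\tr_B^G(Ne)$ and $n_i:=p+2+(i-1)(p-1)$, so that $n_0=3$, the three defining relations collapse to the single recurrence $h_i=Kh_{i-1}-(3D)^{\frac{p-1}{2}}h_{i-2}$ for $i\ge2$, with $h_0\equiv 2a_1^{n_0}N\pmod{a_0}$. For the remaining base case, substituting $e\equiv2a_1^3$, $D\equiv a_1^2\xi$, $K\equiv(3\xi)^{\frac{p-1}{2}}+a_1^{p-1}$ and $\tr^P(a_3^{p-2})\equiv6a_1(3\xi)^{\frac{p-3}{2}}$ into $h_1=N(Ke-D\tr^P(a_3^{p-2}))$, and using $\xi(3\xi)^{\frac{p-3}{2}}=\tfrac13(3\xi)^{\frac{p-1}{2}}$, the two multiples of $a_1^3(3\xi)^{\frac{p-1}{2}}$ cancel, leaving $h_1\equiv2a_1^{n_1}N\pmod{a_0}$. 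Assuming $h_{i-1}\equiv2a_1^{n_{i-1}}N$ and $h_{i-2}\equiv2a_1^{n_{i-2}}N$, the recurrence gives
\begin{align*}
h_i=Kh_{i-1}-(3D)^{\frac{p-1}{2}}h_{i-2}
&\equiv 2(3\xi)^{\frac{p-1}{2}}a_1^{n_{i-1}}N+2a_1^{n_{i-1}+p-1}N\\
&\quad-2(3\xi)^{\frac{p-1}{2}}a_1^{p-1+n_{i-2}}N\pmod{a_0};
\end{align*}
since $n_{i-1}=n_{i-2}+(p-1)$ the outer terms cancel and $h_i\equiv2a_1^{n_i}N\pmod{a_0}$.

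To conclude I would use that each $h_i$ is homogeneous, and that in grevlex with $a_0$ smallest the $a_0$-free part of a nonzero homogeneous polynomial carries its lead term; since $2a_1^{n_i}N$ has lead monomial $a_3^pa_1^{n_i}$, this gives $\lt(h_i)=2a_3^pa_1^{p+2+(i-1)(p-1)}$, and in particular each $h_i$ is nonzero. The main obstacle will be the modular bookkeeping of the previous paragraph: one must check that the coefficient-$2$ cancellations are exact, which rests on the precise congruences of Lemmas~\ref{Klem} and~\ref{Ptrlem} together with the identities $\xi(3\xi)^{\frac{p-3}{2}}=\tfrac13(3\xi)^{\frac{p-1}{2}}$ and $3^{p-1}\equiv1\pmod p$, and that the telescoping persists for every $i$, so that no lower term of $h_i$ can overtake $2a_1^{n_i}N$ modulo $a_0$.
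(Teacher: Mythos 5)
Your proposal is correct and follows essentially the same route as the paper: reduce modulo $a_0$, use the congruences of Lemma~\ref{Klem} and Lemma~\ref{Ptrlem} together with $e\equiv_{(a_0)}2a_1^3$ and $D\equiv_{(a_0)}a_1^2\xi$, telescope inductively, and recover $\lt(h_i)$ from Theorem~\ref{orbitdiff}. The only difference is notational: the paper factors out $N$ by defining $r_i$ with $h_i=\tr_B^G(Nr_i)$ and tracks $r_i$ modulo $a_0$, whereas you carry the factor $N$ along and justify the final step with an explicit grevlex observation (the $a_0$-free part of a homogeneous polynomial carries its lead term), which is exactly what the second half of Theorem~\ref{orbitdiff} encapsulates.
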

\begin{proof} The proof is by induction on $i$. Recall that $\lt(D)=3a_1^2a_2^2$. From Lemma~\ref{Klem},
$\lt(K)=a_2^{p-1}$. Using Theorem~\ref{orbitdiff} and Lemma~\ref{Ptrlem}, we have $\lt (\tr_B^G(N\tr^P(a_3^{p-2}))=\frac{2}{3}a_1a_2^{p-3}a_3^p$
and $\lt(\tr_B^G(Ne))=2a_1^3a_3^p$. Thus $h_1$ is indeed a t\^{e}te-\`{a}-t\^{e}te. Since $\lt((3D)^{(p-1)/2})=(a_1a_2)^{p-1}$, it is sufficient to prove
$\lt(h_i)=2a_3^pa_1^{p+2+(i-1)(p-1)}$ for $i\geq1$.

Define
\begin{eqnarray*} r_1&=&K\cdot e - D\cdot \mbox{tr}^{P}(a_3^{p-2}),\\
r_2&=&K\cdot r_1 - (3D)^{\frac{p-1}{2}} \cdot e,\\
r_i&=&K\cdot r_{i-1} - (3D)^{\frac{p-1}{2}} \cdot r_{i-2}\ {\rm for}\ i \geq 3. \end{eqnarray*}
Since $K$ and $D$ are $G$-invariant, we have $h_i=\tr_B^G(Nr_i)$. Thus, using Theorem~\ref{orbitdiff}, it is sufficient to prove
$\lt(r_i)=2a_1^{p+2+(i-1)(p-1)}$ for $i\geq 1$.

Note that $e\equiv_{(a_0)}2a_1^3$ and $D\equiv_{(a_0)}a_1^2\xi$. Thus, using Lemma~\ref{Klem} and Lemma~\ref{Ptrlem},
$$ r_1\equiv_{(a_0)}((3\xi)^{\frac{p-1}{2}}+a_1^{p-1})\cdot 2a_1^3-a_1^2\xi\cdot 2(3^{\frac{p-1}{2}})a_1\xi^{\frac{p-3}{2}}=2a_1^{p+2}.$$
Similarly
$$r_2\equiv_{(a_0)}((3\xi)^{\frac{p-1}{2}}+a_1^{p-1})\cdot 2a_1^{p+2}-(3a_1^2\xi)^{\frac{p-1}{2}}\cdot 2a_1^3=2a_1^{(p+2)+(p-1)}.$$
Using the induction hypothesis,
\begin{eqnarray*}
r_i&\equiv_{(a_0)}&((3\xi)^{\frac{p-1}{2}}+a_1^{p-1})\cdot 2a_1^{p+2+(i-2)(p-1)}-(3a_1^2\xi)^{\frac{p-1}{2}}\cdot 2a_1^{p+2+(i-3)(p-1)}\\
&\equiv_{(a_0)}& 2a_1^{p+2+(i-1)(p-1)},
\end{eqnarray*}
as required.
\end{proof}

\section{Generators and Hilbert series}\label{MAINtheorem}

This section is devoted to the proof of the main theorem. 

\begin{The}
For $p>3$, $\mathbb{F}[V]^{SL_2(\mathbb{F}_p)}$ is generated by
 
\begin{itemize}
\item elements from the image of the transfer 
\item $D, K, L, \delta, Na_0, \tilde{e}$ and 
\item for $p \equiv -1$ mod 3, $\tilde{d}$.
\end{itemize}

\noindent The generators from the image of the transfer fall into three families:

\begin{enumerate} 
\item $\mbox{tr}^{SL_2(\mathbb{F}_p)}(N^ja_2^{(m-1)(p-1)-3j}a_3^{p-1})$ where

\begin{equation*} j=\left\{\begin{array}{ll} 1,\ldots,(p-4)/3 & \mbox{for} \,\, p\equiv 1 \,\, \mbox{mod} \,\, 3\\
1,\ldots,p-2 & \mbox{for} \,\, p\equiv -1 \,\, \mbox{mod} \,\, 3\end{array}\right.\end{equation*}
and $m=2+\lfloor 3j/(p-1)\rfloor$;
\item $\mbox{tr}^{SL_2(\mathbb{F}_p)}(N^ja_3^{p-1-j})$ where

\begin{equation*} j=\left\{\begin{array}{ll}1,\ldots,(p-4)/3 & \mbox{for} \,\, p \equiv 1 \,\, \mbox{mod} \,\, 3\\
1,\ldots,(p-2)/3 & \mbox{for} \,\, p \equiv -1 \,\, \mbox{mod} \,\, 3;\end{array}\right. \end{equation*}

\item and $\mbox{tr}^{SL_2(\mathbb{F}_p)}(N^ja_3^{p-2}a_2^{(m-1)(p-1)+3-3j})$ where

\begin{equation*} j=\left\{\begin{array}{ll} 2,\ldots,(p-4)/3 & \mbox{for} \,\, p \equiv 1 \,\, \mbox{mod} \,\, 3\\
2,\ldots,p-2 \ {\rm with}\ j \neq (p+1)/3, (2p-1)/3 & \mbox{for} \,\, p \equiv -1 \,\, \mbox{mod} \,\, 3 \end{array}\right.\end{equation*}
and $m=2+\lfloor 3j/(p-1)\rfloor$.
\end{enumerate}

\noindent For $p \equiv -1$ mod 3, we have the further family of invariants:

\begin{equation*} \mbox{tr}^{SL_2(\mathbb{F}_p)}(N^ja_3^{\frac{5p-7-3j}{3}}a_2^2), \;\; j=\frac{2p-1}{3},\ldots,p-2. \end{equation*}

\label{maintheorem} \end{The}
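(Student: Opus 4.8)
The plan is to prove $A = R$, where $R:=\field[V]^G$ and $A$ is the subalgebra generated by the listed invariants, by a lead-term/Hilbert-series argument pinned down by the Hughes--Kemper series \cite{HK}. For a graded subalgebra $S\subseteq\field[V]$ write $\widehat S$ for the monomial algebra spanned by $\{\lm(f):f\in S\}$; then $\dim_{\field} S_d=\dim_{\field}\widehat S_d$ in every degree. Trivially $A\subseteq R$, and the infinite t\^{e}te-\`{a}-t\^{e}te family $\{h_i\}$ of Theorem~\ref{inffamthm} lies in $A$, since each $h_i$ is a polynomial in $K$, $D$, $\tilde{e}=\tr_B^G(Ne)$ and $\tr_B^G(N\tr^P(a_3^{p-2}))$, the last being, up to sign, the $j=1$ member of transfer family~(2). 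Let $B$ be the monomial algebra generated by the lead monomials of the listed generators together with the lead monomials $\lm(h_i)=a_3^p a_1^{p+2+(i-1)(p-1)}$. Since every generator of $B$ is the lead monomial of an element of $A$, we obtain the chain $B\subseteq\widehat A\subseteq\widehat R$.

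First I would assemble the generating monomials of $B$ explicitly. Using transitivity of the transfer together with the isobaric weight computation of Lemma~\ref{Isobaricprodtransferlem}, the full transfers in families~(1)--(4) agree up to sign with the composite transfers $\tr_B^G(N^j\tr^P(\cdots))$ whose lead monomials $\gamma_j,\beta_j,\Delta_j,\phi_j$ were computed in Lemmas~\ref{gammafamily}, \ref{pm1amod2}, \ref{Deltafam} and \ref{pm1amod4}. The remaining generators contribute $\lm(D)=a_1^2a_2^2$, $\lm(K)=a_2^{p-1}$, $\lm(L)=a_2^p a_1$, $\lm(\delta)=a_3^{pc}$, $\lm(Na_0)=a_3^p a_0$, $\lm(\tilde{e})=a_3^p a_1^3$, and, when $p\equiv_{(3)}-1$, $\lm(\tilde{d})=a_3^{p(p+1)/3}a_1^2$, each justified by Theorem~\ref{orbitdiff} together with Lemmas~\ref{Klem} and \ref{Ptrlem}.

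The decisive step is to compute the Hilbert series $H(B,t)$ and to show it equals $H(R,t)$. Granting this, the chain $B\subseteq\widehat A\subseteq\widehat R$ and the equality $\dim_{\field} B_d=\dim_{\field}\widehat R_d$ force $B_d=\widehat R_d$ in every degree, whence $\widehat A=\widehat R=B$; then $\dim_{\field} A_d=\dim_{\field}\widehat A_d=\dim_{\field} R_d$ gives $A=R$, and as a byproduct the listed generators together with $\{h_i\}$ form a SAGBI basis for $R$. I expect the Hilbert series bookkeeping to be the main obstacle. I would organise it by grading the monomials of $B$ according to their $a_3$-exponent $pj$ (the power $N^j$), treating $p\equiv_{(3)}1$ and $p\equiv_{(3)}-1$ separately, and checking that for each fixed $j$ the monomials in $a_2,a_1,a_0$ produced by $\gamma_j,\beta_j,\Delta_j,\phi_j$ and the $h_i$ exactly fill the graded piece predicted by \cite{HK}. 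The delicate points are to verify that this exhaustion leaves no gaps and that overlaps between families do not corrupt the dimension count; the explicit exclusions in the ranges of $j$ (for instance $j\neq(p+1)/3,(2p-1)/3$ in family~(3), and the role of the $h_i$ in supplying the lead monomials $a_3^p a_1^{p+2+(i-1)(p-1)}$) are precisely the combinatorial fine-tuning that makes $H(B,t)$ match $H(R,t)$.
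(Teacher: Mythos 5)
Your proposal takes essentially the same route as the paper's proof: reduce to lead monomials, adjoin the t\^ete-\`a-t\^ete family $h_i$ of Theorem~\ref{inffamthm} (correctly observed to lie in the proposed algebra via $K$, $D$, $\tilde{e}$ and the $j=1$ transfer), identify the generators' lead monomials through Lemmas~\ref{gammafamily}--\ref{pm1amod4}, \ref{Klem}, \ref{Ptrlem} and Theorem~\ref{orbitdiff}, and force equality by matching Hilbert series against Hughes--Kemper, organised by the $a_3$-exponent $pj$ with the two congruence cases mod $3$ treated separately. The only organisational difference is that the paper avoids computing the series of the full lead-term algebra (your $B$, with its attendant exhaustion and overlap checks) by instead computing the series of a judiciously chosen $A$-submodule $Z$ of $\lt(\mcG)$, with the parity invariant keeping the cones disjoint; since $HS(Z,t)$ already equals the Hughes--Kemper series, the chain $HS(Z,t)\leq HS(\lt(\mcG),t)\leq HS(R,t)\leq HS(\slinv,t)$ collapses, which is exactly the squeeze your plan relies on.
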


\hbox{}

Let $\mcC$ denote the proposed generating set and let $R$ denote
the algebra generated by $\mcC$. Since the elements of $\mcC$ are homogeneous invariants,
$R$ is a graded subalgebra of $\slinv$. Recall that the {\it Hilbert Series} of a graded vector
space $M=\oplus_{\ell=0}^{\infty}M_{\ell}$ is the formal power series 
$HS(M,t)=\sum_{\ell=0}^{\infty}{\rm dim}(M_{\ell})t^{\ell}$. Since $R$ is a graded subalgebra
of $\slinv$, we have $HS(R,t)\leq HS(\slinv,t)$. We  prove the theorem by showing these series
are equal.

Define $\mcG:=\mcC\cup\{h_i, \, \forall i\geq 1\}$ and let
$\lt(\mcG)$ denote the subalgebra generated by the lead monomials of the elements of $\mcG$.  
In each of the two cases, $p \equiv 1$ mod 3 and $p \equiv -1$ mod 3, we
choose a graded subspace $Z$ of $\lt(\mcG)$, giving a chain of inequalities:
\begin{equation*} HS(Z,t) \leq HS(LT(\mcG),t) \leq HS(LT(R),t)= HS(R,t) 
\leq HS(\slinv,t). \end{equation*}
We calculate $HS(Z,t)$ and  compare with  Hughes-Kemper \cite{HK} to show
$HS(Z,t)=HS(\slinv,t)$. This proves that $\mcC$ is a generating set and $\mcG$ is a SAGBI basis.

The invariants $D,K,N a_0$, and $\delta$ have lead monomials 
$LM(D)=a_2^2a_1^2$, $LM(K)=a_2^{p-1}$, $LM(N a_0)=a_3^pa_0$ and $LM(\delta)=a_3^{pc}$, 
where $c=(p-1)/3$ if $p \equiv_{(3)} 1$ and $a=p-1$ if $p \equiv_{(3)} -1$. 
Define
\begin{equation*} A:=\mathbb{F}[a_2^2a_1^2, a_2^{p-1}, a_3^pa_0, a_3^{pc}], \end{equation*}
the algebra generated by $LM(D),LM(K),LM(N a_0)$ and $LM(\delta)$. In each of the two cases we will define $Z$
as an $A$  - submodule of $\lt(\mcG)$. For a monomial $a_3^{e_3}a_2^{e_2}a_1^{e_1}a_0^{e_0}$ we assign a {\it parity}
$(e_2\, {\rm mod}\, 2, e_1\, {\rm mod}\, 2)$ and observe that the action of $A$ preserves parity.

\noindent \textbf{The $p \equiv 1$ mod 3 Case}

Recall from Theorem~\ref{inffamthm} that the lead monomials of  the t\^{e}te-\`{a}-t\^{e}tes 
$h_i$ are $LM(h_i)=a_3^pa_1^{p+2+(i-1)(p-1)}$ for $i\geq1$. 
By Lemma~\ref{orbitdiff} the lead monomial of the invariant $\tilde{e}=\tr_B^{SL_2(\mathbb{F}_p)}(N e)$ 
is equal to $a_3^pa_1^3$. Hence we have

\begin{equation*} n_i:=a_3^pa_1^{3+i(p-1)} \; \mbox{for} \; i\geq 0 \end{equation*}
as the lead monomials of  $\tilde{e}$ and $h_i$. Denote

$$ \alpha_{ij}:=n_0^{j-1}n_i
=a_3^{pj}a_1^{3j+(p-1)i}, \;\; 1\leq j\leq (p-1)/3, \;\; i\geq 0 $$
and
$$
\epsilon_{ij}:=LM(L)\alpha_{ij}=a_3^{pj}a_2^pa_1^{1+3j+(p-1)i}, \;\; 1\leq j\leq (p-1)/3, \;\; i\geq 0.
$$
Define $Z$ to be the $A$ - module generated by the monomials
$$
\mcB:=\left\{1, LM(L), \gamma_j, \beta_j, \Delta_j, \alpha_{ij}, \epsilon_{ij}
\mid i\in\bbN\right\}. 
$$
where $1\leq j\leq (p-1)/3$ for the $\alpha$ and $\epsilon$ families, $1\leq j<(p-1)/3$ for the $\gamma$ and $\beta$ families,
and $1<j<(p-1)/3$ for the $\Delta$ family; see Lemma~\ref{gammafamily}, Lemma~\ref{pm1amod2} and Lemma~\ref{Deltafam}
for the definition of $\gamma_j$, $\beta_j$ and $\Delta_j$, and compare
with the range of $j$ for the families of transfers in Theorem~\ref{maintheorem}.

The action of $LM(Na_0)$ and
 $LM(\delta)$ on $Z$ is essentially free: every monomial in $Z$ with a factor of
 $a_0^{e_0}$ is divisible by $LM(Na_0)^{e_0}$ and the remaining power of $a_3$
determines the power of $\lm(\delta)$. 
Let $\tZ$ denote the span of the monomials in $Z$ which are reduced with respect to  $\lm(Na_0)$ and $\lm(\delta)$.
Then 
$$HS(Z,t)=\frac{HS(\tZ,t)}{(1-t^{p+1})(1-t^{p(p-1)/3})}.$$

Define $\tZ_j$ to be the span of the monomials in $\tZ$ of the form $a_3^{pj}a_2^{e_2}a_1^{e_1}$.
Then $$\tZ=\bigoplus_{j=0}^{(p-1)/3}\tZ_j.$$ We proceed by computing $HS(\tZ_j,t)$ for 
$j=0,1,\ldots,(p-1)/3$. For fixed $j$, we determine the monomials
$a_3^{pj}a_2^{x}a_1^{y}\in \tZ_j$. This set can be identified with a subset
of the integral lattice in the $xy$-plane. Each element of $\mcB$ gives rise
to a $\field[\lm(D),\lm(K)]$-submodule corresponding to a cone in the $xy$-plane. 
The monomials in $\tZ_j$ correspond to the union of these cones. The cones
corresponding to elements of $\mcB$ of different parity are disjoint.

For $j=0$,
the only elements of $\mcB$ are 1 and $LM(L)=a_2^pa_1$, of parity $(0,0)$ and $(1,1)$ respectively.
Thus 
$$HS(\tZ_0,t)=\frac{1+t^{p+1}}{(1-t^4)(1-t^{p-1})}.$$

For $j=(p-1)/3=c$, the elements of $\mcB$ fall into two families:
\begin{itemize}
\item $\alpha_{ic}= a_3^{pc}a_1^{p-1+i(p-1)}$ for $i\in\bbN$, with  parity $(0,0)$;
\item $\epsilon_{ic}=a_3^{pc}a_2^pa_1^{p+i(p-1)}$ for $ i\in\bbN$, with parity $(1,1)$.
\end{itemize}

For parity $(0,0)$: Note that $\alpha_{0c}\lm(K)=\lm(\delta)\lm(D)^{\frac{p-1}{2}}\not\in\tZ$.
Furthermore, for $i>0$, we have $\alpha_{ic}\lm(K)=\alpha_{i-1,c}\lm(D)^{\frac{p-1}{2}}$. Thus it is sufficient
to count the monomials $\alpha_{ic}\lm(D)^{\ell}$ with $i,\ell\in\bbN$.

For parity $(1,1)$: Note that $\epsilon_{0c}\lm(K)=\lm(\delta)\lm(L)\lm(D)^{\frac{p-1}{2}}\not\in\tZ$.
Furthermore, for $i>0$, we have $\epsilon_{ic}\lm(K)=\epsilon_{i-1,c}\lm(D)^{\frac{p-1}{2}}$. Thus it is sufficient
to count the monomials $\epsilon_{ic}\lm(D)^{\ell}$ with $i,\ell\in\bbN$.

Counting monomials and identifying the appropriate geometric series gives
$$HS(\tZ_c,t)=\frac{t^{pc}(t^{p-1}+t^{2p})}{(1-t^4)(1-t^{p-1})}=\frac{t^{pc+p-1}(1+t^{p+1})}{(1-t^4)(1-t^{p-1})}.$$

In the case $j=1$, we have the following elements of $\mcB$:
\begin{itemize}
\item $\alpha_{i1}= a_3^pa_1^{3+i(p-1)}$ for $i\in\bbN$, with parity $(0,1)$;
\item $\beta_1=a_3^pa_2^{p-3}a_1$, with parity $(0,1)$;
\item $\gamma_1=a_3^pa_2^{2p-5}$, with parity $(1,0)$;
\item $\epsilon_{i1}=a_3^pa_2^pa_1^{4+i(p-1)}$ for $i\in\bbN$, with parity $(1,0)$.
\end{itemize}

For Parity $(0,1)$: Since $\alpha_{01}\lm(K)=\beta_1\lm(D)$ and $\alpha_{i1}\lm(K)=\alpha_{i-1,1}\lm(D)^{\frac{p-1}{2}}$, 
for $i>0$, it is sufficient to count the  monomials $\alpha_{i1}\lm(D)^{\ell}$ and $\beta_1\lm(K)^i\lm(D)^{\ell}$.

For Parity $(1,0)$: Since $\epsilon_{01}\lm(K)=\gamma_1\lm(D)$ and $\epsilon_{i1}\lm(K)=\epsilon_{i-1,1}\lm(D)^{\frac{p-1}{2}}$, 
for $i>0$, it is sufficient to count the monomials $\epsilon_{i1}\lm(D)^{\ell}$ and $\gamma_1\lm(K)^i\lm(D)^{\ell}$.

Counting monomials and identifying the appropriate geometric series gives
$$HS(\tZ_1,t)=\frac{t^{p}(t^3+t^{p-2}+t^{p+4}+t^{2p-5})}{(1-t^4)(1-t^{p-1})}.$$

We now consider the case where $j=2k$
is even and $2\leq j<\frac{p-1}{3}$.
The relevant  monomials are:
\begin{itemize}
\item $\alpha_{ij}= a_3^{pj}a_1^{3j+i(p-1)}$ for $i\in\bbN$, with parity $(0,0)$;
\item $\beta_j=a_3^{pj}a_2^{p-1-2j}a_1^j$, with parity $(0,0)$;
\item $\gamma_j=a_3^{pj}a_2^{2p-2-3j}$, with parity $(0,0)$;
\item $\Delta_j=a_3^{pj}a_2^{2p-1-3j}a_1$, with parity $(1,1)$;
\item $\epsilon_{ij}=a_3^{pj}a_2^pa_1^{3j+1+i(p-1)}$ for $i\in\bbN$, with parity $(1,1)$.
\end{itemize}
For parity $(0,0)$: Observe that $\beta_j\lm(K)=\gamma_j\lm(D)^k$, $\alpha_{0j}\lm(K)=\beta_j\lm(D)^j$
and $\alpha_{ij}\lm(K)=\alpha_{i-1,j}\lm(D)^{(p-1)/2}$ for $i>0$. Thus it is sufficient to count
the monomials $\alpha_{ij}\lm(D)^{\ell}$, $\beta_j\lm(D)^{\ell}$ and $\gamma_j\lm(D)^{\ell}\lm(K)^i$, for
$i,\ell\in\bbN$.

For parity $(1,1)$:  Since $\epsilon_{0j}\lm(K)=\Delta_j\lm(D)^{3k}$ and 
$\epsilon_{ij}\lm(K)=\epsilon_{i-1,j}\lm(D)^{\frac{p-1}{2}}$ 
for $i>0$, it is sufficient to count the monomials $\epsilon_{ij}\lm(D)^{\ell}$ and $\Delta_j\lm(K)^i\lm(D)^{\ell}$.

Counting monomials and identifying the appropriate geometric series gives
$$HS(\tZ_{2k},t)=t^{2kp}\left(\frac{t^{6k}+t^{2p-2-6k}+t^{p+6k+1}+t^{2p-6k}}{(1-t^4)(1-t^{p-1})}
+\frac{t^{p-1-2k}}{1-t^4}\right)$$
for  $k=1,\ldots,\frac{p-7}{6}$. 

For $j=2k+1$ odd with $1<j<(p-1)/3$, the elements of $\mcB$ are:
\begin{itemize}
\item $\alpha_{ij}= a_3^{pj}a_1^{3j+i(p-1)}$ for $i\in\bbN$, with parity $(0,1)$;
\item $\beta_j=a_3^{pj}a_2^{p-1-2j}a_1^j$ with parity $(0,1)$;
\item $\Delta_j=a_3^{pj}a_2^{2p-1-3j}a_1$ with parity $(0,1)$;
\item $\gamma_j=a_3^{pj}a_2^{2p-2-3j}$ with parity $(1,0)$;
\item $\epsilon_{ij}=a_3^{pj}a_2^pa_1^{3j+1+i(p-1)}$ for $i\in\bbN$, with parity $(1,0)$.
\end{itemize}

For parity $(0,1)$:  Observe that $\beta_j\lm(K)=\Delta_j\lm(D)^k$, $\alpha_{0j}\lm(K)=\beta_j\lm(D)^j$
and $\alpha_{ij}\lm(K)=\alpha_{i-1,j}\lm(D)^{(p-1)/2}$ for $i>0$. Thus it is sufficient to count
the monomials $\alpha_{ij}\lm(D)^{\ell}$, $\beta_j\lm(D)^{\ell}$ and $\Delta_j\lm(D)^{\ell}\lm(K)^i$, for
$i,\ell\in\bbN$.

For parity $(1,0)$:   Since $\epsilon_{0j}\lm(K)=\gamma_j\lm(D)^{3k}$ and 
$\epsilon_{ij}\lm(K)=\epsilon_{i-1,j}\lm(D)^{\frac{p-1}{2}}$ 
for $i>0$, it is sufficient to count the monomials $\epsilon_{ij}\lm(D)^{\ell}$ and $\gamma_j\lm(K)^i\lm(D)^{\ell}$.

Counting monomials and identifying the appropriate geometric series gives
$$
HS(\tZ_{2k+1},t)=t^{(2k+1)p}\left(\frac{t^{6k+3}+t^{2p-2-6k-3}+t^{p+6k+4}+t^{2p-6k-3}}{(1-t^4)(1-t^{p-1})}
+\frac{t^{p-2-2k}}{1-t^4}\right)
$$
for  $k=1,\ldots,\frac{p-7}{6}$. 

The even and odd formulae can be put in a common form: for $1<j<(p-1)/3$,
$$HS(\tZ_j,t)=\frac{t^{jp}\left(t^{3j}+t^{2p-2-3j}+t^{p+1+3j}+t^{2p-3j}+t^{p-1-j}(1-t^{p-1})\right)}{(1-t^4)(1-t^{p-1})}.$$

Summing over $j$ and simplifying gives

\begin{equation*} HS(Z,t)=\frac{Numer(t)}{Denom(t)} \end{equation*}

\noindent where

\begin{eqnarray*} Numer(t)&=&(1+t^{p+1}+t^{p+3}+t^{2p-2}+t^{2p+4}+t^{3p-5}+t^{p-1}(t^{2p-2}-t^{(p-1)(p-1)/3})\\
&+&t^{\frac{p(p-1)}{3}+p-1}+t^{\frac{p(p-1)}{3}+2p})(1-t^{p-3})(1-t^{p+3})\\
&+&(t^{2p-2}+t^{2p})(t^{2p-6}- t^{(p-3)(p-1)/3})(1-t^{p+3})\\
&+&(1+t^{p+1})(t^{2p+6}-t^{(p+3)(p-1)/3})(1-t^{p-3})\end{eqnarray*}

\noindent and

\begin{equation*} Denom(t)=(1-t^4)(1-t^{p-3})(1-t^{p-1})(1-t^{p+1})(1-t^{p+3})(1-t^{\frac{p(p-1)}{3}}).\end{equation*}

\noindent This agrees with the calculation of $HS(\field[V]^G,t)$ by Hughes-Kemper \cite[2.7(d)]{HK}.

\hbox{}

\noindent \textbf{The $p \equiv -1$ mod $3$ Case}

In this case the lead monomial of $\delta=\tr_B^G(N^c)$ is $a_3^{p(p-1)}$ and the generators of $Z$ will be monomials
divisible by $a_3^{pj}$ for $j\leq p-1$. Using Lemma~\ref{orbitdiff} the lead monomial of $\tilde{d}$ is
$a_3^{(p+1)/3}a_1^2$. As in the proof of the $p\equiv_{(3)} 1$ case, we denote the lead monomials of $\tilde{e}$ and
$h_i$ by $n_i=a_3^pa_1^{3+i(p-1)}$ for $i\geq 0$. Define $s:=\lfloor 3j/(p-1)\rfloor$,

$$\alpha_{ij}:=\lm(\tilde{d})^s n_i n_0^{j-1-s(p-1)/3}=a_3^{pj}a_1^{3j+(p-1)(i-s)},
 \;\; 1\leq j\leq (p-1), \;\; i\in\bbN $$
and
$$
\epsilon_{ij}:=\lm(L)\alpha_{ij}=a_3^{pj}a_2^pa_1^{3j+(p-1)(i-s)+1}, \;\; 1\leq j\leq (p-1), \;\; i\in\bbN.
$$

\noindent Further, we assign the following notation:

\begin{eqnarray*} 
\lambda &:=& \lm(\tilde{d})\gamma_{\frac{p-2}{3}}=a_3^{p\frac{2p-1}{3}}a_2^pa_1^2, \\
\mu     &:=&\beta_1 \cdot \gamma_{\frac{p-2}{3}}=a_3^{p\frac{p+1}{3}}a_2^{2p-3}a_1, \\
\eta_j&:=&\lm(\tilde{d})\beta_{j-(p+1)/3}=a_3^{pj}a_2^{\frac{5p-1}{3}-2j}a_1^{j-\frac{p-5}{3}}
 \;\;\; \mbox{for} \;\; \frac{p+4}{3}\leq j\leq\frac{2p-1}{3}.
\end{eqnarray*}

Define $Z$ to be the $A$ -- module generated by 
\begin{equation*}\mcB:=\{1, LM(L), \alpha_{i,j}, \epsilon_{i,j}, \gamma_j, \beta_j, \Delta_j, \phi_j, \lambda, \mu, \eta_j
\mid i\in\bbN\}. \end{equation*}
where the ranges in $j$ are given above or in the statement of Theorem~\ref{maintheorem} 

As in the $p\equiv_{(3)} 1$ case, the action of $\lm(Na_0)$ and $\lm(\delta)$ on $Z$ is essentially free.
Let $\tZ$ denote the span of the monomials of $Z$ which are reduced with respect to $\lm(Na_0)$ and $\lm(\delta)$.
Then 
$$HS(Z,t)=\frac{HS(\tZ,t)}{(1-t^{p+1})(1-t^{p(p-1)})}.$$

Define $\tZ_j$ to be the span of the monomials in $\tZ$ of the form $a_3^{pj}a_2^x a_1^y$.
Then $$\tZ=\bigoplus_{j=0}^{p-1}\tZ_j.$$

The calculation of $HS(\tZ_j,t)$ for $j<(p-1)/3$ is precisely as in the $p\equiv_{(3)} 1$ case.

For $j=\frac{p+1}{3}$ the elements of $\mcB$ are:

\begin{itemize}
\item $\alpha_{i,\frac{p+1}{3}}= a_3^{p\frac{p+1}{3}}a_1^{2+i(p-1)}$ for $i\in\bbN$, with parity $(0,0)$;
\item $\gamma_{\frac{p+1}{3}}=a_3^{p\frac{p+1}{3}}a_2^{2p-4}$ with parity $(0,0)$;
\item $\epsilon_{i,\frac{p+1}{3}}=a_3^{p\frac{p+1}{3}}a_2^pa_1^{3+i(p-1)}$ for $i\in\bbN$, with parity $(1,1)$;
\item $\mu=a_3^{p\frac{p+1}{3}}a_2^{2p-3}a_1$ with parity $(1,1)$.
\end{itemize}

For parity $(0,0)$: Observe that $\lm(D)\gamma_{\frac{p+1}{3}}=\lm(K)^2\alpha_{0,\frac{p+1}{3}}$
and $\alpha_{ij}\lm(K)=\alpha_{i-1,j}\lm(D)^{(p-1)/2}$ for $i>0$. Thus it is sufficient to count the
monomials $\alpha_{i+1,(p+1)/3}\lm(D)^{\ell}$, $\alpha_{0,(p+1)/3}\lm(D)^{\ell}\lm(K)^i$, and  
$\gamma_{(p+1)/3}\lm(K)^i$ for $i,\ell\in\bbN$.

For parity $(1,1)$: Observe that $\lm(D)\mu=\lm(K)\epsilon_{0,\frac{p+1}{3}}$
and $\epsilon_{ij}\lm(K)=\epsilon_{i-1,j}\lm(D)^{(p-1)/2}$ for $i>0$. Thus it is sufficient to count the
monomials  $\mu\lm(K)^i$ and $\epsilon_{i,(p+1)/3}\lm(D)^{\ell}$.

Counting monomials and identifying the appropriate geometric series gives
$$HS\left(\tZ_{\frac{p+1}{3}},t\right)=t^{p(p+1)/3}\left(\frac{t^2+t^{p+1}+t^{p+3}+t^{2p-2}}{(1-t^4)(1-t^{p-1})}
+\frac{t^{2p-4}}{1-t^{p-1}}\right).$$

\noindent We now consider the range $\frac{p+4}{3}\leq j\leq\frac{2p-4}{3}$. The following table indicates the monomials 
 and their respective parities:

\hbox{}

\begin{tabular}{|l|l|c|c|}
\hline \multicolumn{2}{|c|}{Monomial} & Parity $j$ even & Parity $j$ odd \\ \hline
$\alpha_{i,j}$ & $a_3^{pj}a_1^{3j-p+1+i(p-1)}$, $i\in\bbN$ & (0,0) & (0,1) \\ \hline
$\eta_j$ & $a_3^{pj}a_2^{\frac{5p-1-6j}{3}}a_1^{\frac{3j-p+5}{3}}$ & (0,0) & (0,1) \\ \hline
$\gamma_j$ & $a_3^{pj}a_2^{3p-3-3j}$ & (0,0) & (1,0) \\ \hline
$\Delta_j$ & $a_3^{pj}a_2^{3p-2-3j}a_1$ & (1,1) & (0,1) \\ \hline
$\epsilon_{i,j}$ & $a_3^{pj}a_2^pa_1^{3j-p+2+i(p-1)}$, $i\in\bbN$ & (1,1) & (1,0) \\ \hline
\end{tabular}

\hbox{} 

For $j$ even, parity $(0,0)$: We have $\eta_j\lm(K)=\gamma_j\lm(D)^{(3j-p+5)/6}$, 
$\alpha_{0j}\lm(K)=\eta_j\lm(D)^{j-(p+1)/3}$ and $\alpha_{ij}\lm(K)=\alpha_{i-1,j}\lm(D)^{(p-1)/2}$ for $i>0$.
Thus we need to count $\alpha_{ij}\lm(D)^{\ell}$, $\eta_j\lm(D)^{\ell}$ and $\gamma_j\lm(K)^i\lm(D)^{\ell}$.

For $j$ even, parity $(1,1)$: $\epsilon_{ij}\lm(K)=\epsilon_{i-1,j}\lm(D)^{(p-1)/2}$ and
$\epsilon_{0j}\lm(K)=\Delta_j\lm(D)^{(3j-p+1)/2}$. Thus we need to count
$\epsilon_{ij}\lm(D)^{\ell}$ and $\Delta_j\lm(K)^i\lm(D)^{\ell}$.

Counting monomials and identifying the appropriate geometric series gives:
\begin{equation*} 
HS(\tZ_{j},t)=t^{jp}\left(\frac{t^{3j+2}+t^{3p-3-3j}+t^{3p-1-3j}+t^{3j-p+1}}{(1-t^4)(1-t^{p-1})}
+\frac{t^{4(p+1)/3-j}}{1-t^4}\right).
\end{equation*}

For $j$ odd, the calculations are analogous with the roles of $\gamma_j$ and $\Delta_j$ reversed. The contribution 
to $HS(\tZ,t)$ is the same for both $j$ even and $j$ odd. Thus for $(p+1)/3<j<(2p-1)/3$ we have:

\begin{equation*} 
HS(\tZ_{j},t)=t^{jp}
\left(\frac{t^{3j+2}+t^{3p-3-3j}+t^{3p-1-3j}+t^{3j-p+1}+t^{4(p+1)/3-j}(1-t^{p-1})}{(1-t^4)(1-t^{p-1})}\right).
\end{equation*}

For $j=\frac{2p-1}{3}$ the monomials to consider are:

\begin{itemize}
\item $\alpha_{i,\frac{2p-1}{3}}= a_3^{p\frac{2p-1}{3}}a_1^{p+i(p-1)}$ for $i\in\bbN$, with parity $(0,1)$;
\item $\phi_{\frac{2p-1}{3}}=a_3^{p\frac{2p-1}{3}}a_2^{p-1}a_1$ with parity $(0,1)$;
\item $\eta_{\frac{2p-1}{3}}=a_3^{p\frac{2p-1}{3}}a_2^{\frac{p+1}{3}}a_1^{\frac{p+4}{3}}$ with parity $(0,1)$;
\item $\gamma_{\frac{2p-1}{3}}=a_3^{p\frac{2p-1}{3}}a_2^{2p-3}$ with parity $(1,0)$;
\item $\epsilon_{i,\frac{2p-1}{3}}=a_3^{p\frac{2p-1}{3}}a_2^pa_1^{p+1+i(p-1)}$ for $i\in\bbN$, with parity $(1,0)$;
\item $\lambda=a_3^{p\frac{p+1}{3}}a_2^pa_1^2$ with parity $(1,0)$.
\end{itemize}

For parity $(0,1)$: $\alpha_{ij}\lm(K)=\alpha_{i-1,j}\lm(D)^{(p-1)/2}$ for $i>0$, 
$\alpha_{0j}\lm(K)=\eta_j\lm(D)^{(p-2)/3}$ and $\eta_j\lm(K)=\phi_j\lm(D)^{(p+1)/6}$.
Thus we need to count  $\alpha_{ij}\lm(D)^{\ell}$, $\eta_j\lm(D)^{\ell}$ and $\phi_j\lm(K)^i\lm(D)^{\ell}$.

For parity $(1,0)$: $\epsilon_{ij}\lm(K)=\epsilon_{i-1,j}\lm(D)$ for $i>0$,
$\epsilon_{0j}\lm(K)=\lambda\lm(D)^{(p-1)/2}$ and $\lambda\lm(K)=\gamma_j\lm(D)$.
Thus we need to count  $\epsilon_{ij}\lm(D)^{\ell}$, $\lambda\lm(D)^{\ell}$ and $\gamma_j\lm(K)^i\lm(D)^{\ell}$.

Counting monomials and identifying the appropriate geometric series gives:
\begin{equation} 
HS\left(\tZ_{\frac{2p-1}{3}},t\right)=t^{p(2p-1)/3}
\left(\frac{2t^p+t^{2p-3}+t^{2p+1}}{(1-t^4)(1-t^{p-1})}
+\frac{t^{p+2}+t^{(2p+5)/3}}{1-t^4}\right).
\end{equation}

We now consider the range $\frac{2p+2}{3}\leq j\leq p-2$. The following table gives the relevant
 monomials and their parities:

\hbox{}

\begin{tabular}{|l|l|c|c|}
\hline \multicolumn{2}{|c|}{Monomial} & Parity $j$ even & Parity $j$ odd \\ \hline
$\alpha_{i,j}$ & $a_3^{pj}a_1^{3j-2p+2+i(p-1)}$, $i\in\bbN$ & (0,0) & (0,1) \\ \hline
$\phi_j$ & $a_3^{pj}a_2^{\frac{7p-5-6j}{3}}a_1^{\frac{3j-2p+4}{3}}$ & (0,0) & (0,1) \\ \hline
$\gamma_j$ & $a_3^{pj}a_2^{4p-4-3j}$ & (0,0) & (1,0) \\ \hline
$\Delta_j$ & $a_3^{pj}a_2^{4p-3-3j}a_1$ & (1,1) & (0,1) \\ \hline
$\epsilon_{i,j}$ & $a_3^{pj}a_2^pa_1^{3j-2p+3+i(p-1)}$, $i\in\bbN$ & (1,1) & (1,0) \\ \hline
\end{tabular}

\hbox{}

For $j$ even, parity $(0,0)$: We have $\phi_j\lm(K)=\gamma_j\lm(D)^{(3j-2p+4)/6}$, 
$\alpha_{0j}\lm(K)=\phi_j\lm(D)^{j-(2p-1)/3}$ and $\alpha_{ij}\lm(K)=\alpha_{i-1,j}\lm(D)^{(p-1)/2}$ for $i>0$.
Thus we need to count $\alpha_{ij}\lm(D)^{\ell}$, $\phi_j\lm(D)^{\ell}$ and $\gamma_j\lm(K)^i\lm(D)^{\ell}$.

For $j$ even, parity $(1,1)$: $\epsilon_{ij}\lm(K)=\epsilon_{i-1,j}\lm(D)^{(p-1)/2}$ and
$\epsilon_{0j}\lm(K)=\Delta_j\lm(D)^{(3j-2p+2)/2}$. Thus we need to count
$\epsilon_{ij}\lm(D)^{\ell}$ and $\Delta_j\lm(K)^i\lm(D)^{\ell}$.

Counting monomials and identifying the appropriate geometric series gives:
\begin{equation*} 
HS(\tZ_{j},t)=t^{jp}\left(\frac{t^{3j-2p+2}+t^{4p-4-3j}+t^{4p-4-3j}+t^{3j-p+3}}{(1-t^4)(1-t^{p-1})}
+\frac{t^{5(p+1)/3-j-2}}{1-t^4}\right).
\end{equation*}

For $j$ odd, the calculations are analogous with the roles of $\gamma_j$ and $\Delta_j$ reversed. The contribution 
to $HS(\tZ,t)$ is the same for both $j$ even and $j$ odd. Thus for $(2p-1)/3<j<p-1$ we have:

\begin{equation*} 
HS(\tZ_{j},t)=t^{jp}
\left(\frac{t^{3j+2-2p}+t^{4p-4-3j}+t^{4p-2-3j}+t^{3j-p+3}+t^{5(p+1)/3-j-2}(1-t^{p-1})}{(1-t^4)(1-t^{p-1})}\right).
\end{equation*}

Finally, we consider the case $j=p-1$. The only monomials we have here are

\begin{itemize}
\item $\alpha_{i,p-1}= a_3^{p(p-1)}a_1^{p-1+i(p-1)}$ for $i\in\bbN$, with parity $(0,0)$;
\item $\epsilon_{i,p-1}=a_3^{p(p-1)}a_2^pa_1^{p+i(p-1)}$ for $i\in\bbN$, with $(1,1)$.
\end{itemize}

Note that $\alpha_{0,p-1}\lm(K)=\lm(\delta)\lm(D)^{(p-1)/2}\not\in\tZ$ and, for $i>0$, we have
$\alpha_{i,p-1}\lm(K)=\alpha_{i-1,p-1}\lm(D)^{(p-1)/2}$. Similarly,
$$\epsilon_{0,p-1}\lm(K)=\lm(\delta)\lm(L)\lm(D)^{(p-1)/2}\not\in\tZ$$ and, for $i>0$,
$\epsilon_{i,p-1}\lm(K)=\epsilon_{i-1,p-1}\lm(D)^{(p-1)/2}$.
Thus it is sufficient
to count the monomials $\alpha_{i,p-1}\lm(D)^{\ell}$ and  $\epsilon_{i,p-1}\lm(D)^{\ell}$  with $i,\ell\in\bbN$.
Counting monomials and identifying the appropriate geometric series gives
$$HS(\tZ_{p-1},t)=\frac{t^{p(p-1)}(t^{p-1}+t^{2p})}{(1-t^4)(1-t^{p-1})}.$$

Summing over $j$ and simplifying gives

\begin{equation*} HS(Z,t)=\frac{Numer(t)}{Denom(t)} \end{equation*}

\noindent where
\begin{eqnarray*} Numer(t)&=&\chi_1(t)(1-t^{p-3})(1-t^{p+3})+\chi_2(t)(1-t^{p+3})+\chi_3(t)(1-t^{p-3}),\\
\chi_1(t)&=& 1+t^{p+1}+t^{p(p+1)}+t^{(p+1)(p-1)} +t^p(t^3+t^{p-2}+t^{p+4}+t^{2p-5})\\
 &+&t^{p(p+1)/3}(t^2+t^{p+1}+t^{p+3}+t^{2p-4}+t^{2p-2}-t^{2p})\\
&+&t^{p(2p-1)/3}(2t^p+t^{p+2}+t^{2p-3}+t^{(2p+5)/3}(1-t^{p-1}))\\
&+&t^{3(p-1)}(1-t^{(p-1)(p-5)/3})(1+t^{p(p-2)/3+3}+t^{2p(p-2)/3+2}),\\
\chi_2(t)&=&t^{4(p-2)}(1-t^{(p-3)(p-5)/3})(1+t^2)(1+t^{p(p-2)/3+1}+t^{2p(p-2)/3+2}) ,\\
\chi_3(t)&=&t^{2p+6}(1-t^{(p+3)(p-5)/3})(1+t^{p+1})(1+t^{p(p-2)/3-1}+t^{2p(p-2)/3-2})\\
& {\rm and}&\\
 Denom(t)&=&(1-t^4)(1-t^{p-3})(1-t^{p-1})(1-t^{p+1})(1-t^{p+3})(1-t^{p(p-1)}).
\end{eqnarray*}

\noindent This agrees with the calculation of $HS(\field[V]^G,t)$ by Hughes-Kemper \cite[2.7(d)]{HK}.

\section{Concluding Remarks}

We do not claim that the generating sets given in Theorem~\ref{maintheorem} are minimal. 
However, for $p=5$ and $p=7$, MAGMA~\cite{magma} calculations confirm that the given sets are
minimal generating sets. Recall that the {\it Noether number} is the maximum degree of an element
in a minimal homogeneous generating set. Thus the Noether number is $22$ for $p=5$ and $16$ for $p=7$.
Examining the degrees of the polynomials occurring in Theorem~\ref{maintheorem} gives the following.

\begin{Cor} The Noether number of $\mathbb{F}[V]^{SL_2(\mathbb{F}_p)}$ is bounded above by 
\begin{itemize}
\item $p^2-p+4$ if $p \equiv_{(3)} -1$,
\item $\frac{p^2-p+12}{3}$ if $p \equiv_{(3)} 1$.
\end{itemize} 
\label{NNcor} \end{Cor}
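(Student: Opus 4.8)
The plan is to exploit the fact that Theorem~\ref{maintheorem} furnishes an explicit \emph{finite} generating set $\mcC$. The Noether number is at most the largest degree appearing in $\mcC$: in any degree exceeding this value, every invariant is a sum of products of two or more positive-degree generators, hence decomposable, so no minimal generator can occur there. Because the relative transfer preserves degree and every listed generator is homogeneous, the degree of each generator equals the degree of the polynomial to which the transfer is applied, equivalently the degree of the lead monomial recorded in Section~\ref{prelim_sec}. Using $\deg N = p$, the problem thus reduces to a finite comparison of explicit degrees.

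First I would tabulate the degrees of the named invariants: $\deg D = 4$, $\deg K = p-1$, $\deg L = p+1$, $\deg(Na_0)=p+1$, $\deg\tilde e = p+3$, $\deg\delta = pc$, and, when $p\equiv_{(3)}-1$, $\deg\tilde d = p(p+1)/3+2$. Next I would read off the degree of each transfer family from its lead monomial. For family~(1), Lemma~\ref{gammafamily} gives degree $(p-3)j+m(p-1)$; for family~(2), Lemma~\ref{pm1amod2} gives $(p-1)(j+1)$; for family~(3), Lemma~\ref{Deltafam} gives $(p-3)j+m(p-1)+2$; and for family~(4), Lemma~\ref{pm1amod4} gives $(p-1)j+(5p-1)/3$, where throughout $m=2+\lfloor 3j/(p-1)\rfloor$. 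Note that family~(3) exceeds family~(1) by exactly $2$ for each matched pair $(j,m)$, since its lead monomial carries the extra factor $a_2a_1$.

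The decisive step is to maximise these expressions over the stated ranges of $j$. Since $(p-3)j+m(p-1)$ is increasing in $j$ (the jumps in $m$ only reinforce this), the family-(3) maximum occurs at the top of its range. For $p\equiv_{(3)}1$ this is $j=(p-4)/3$ with $m=2$, yielding
\begin{equation*}
(p-3)\frac{p-4}{3}+2(p-1)+2=\frac{p^2-p+12}{3},
\end{equation*}
and for $p\equiv_{(3)}-1$ it is $j=p-2$ with $m=4$, yielding
\begin{equation*}
(p-3)(p-2)+4(p-1)+2=p^2-p+4.
\end{equation*}
I would then verify that every other generator has degree at most this value: family~(1) lies exactly $2$ below it, families~(2) and~(4) grow only linearly after the $N^j$ factor and are easily dominated, and among the named invariants only $\delta$ is close, with $\deg\delta=pc$ equal to $(p^2-p)/3$ or $p^2-p$ respectively. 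These inequalities give the two stated bounds.

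The only delicate point is that the bound is sharp up to a small additive constant, so the comparisons must be carried out precisely rather than by crude estimation: when $p\equiv_{(3)}-1$, family~(1) trails the bound by exactly $2$ and $\delta$ by exactly $4$, so one must evaluate $m=2+\lfloor 3j/(p-1)\rfloor$ correctly at the endpoint $j=p-2$ (where $m=4$) to confirm that neither exceeds $p^2-p+4$. For small primes (for instance $p=5,7$) family~(3) may be empty—indeed the excluded indices $(p+1)/3$ and $(2p-1)/3$ can coincide with endpoints there—so the stated bound is then only a formal upper estimate, the true maximal degree being supplied by family~(1); this is entirely consistent, since the corollary asserts only an upper bound. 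Once these finitely many inequalities are checked, the maximum degree occurring in $\mcC$ is at most the stated value, and the corollary follows.
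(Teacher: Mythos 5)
Your proposal is correct and takes essentially the same approach as the paper: the paper's entire proof is the remark that the bound follows from examining the degrees of the polynomials in Theorem~\ref{maintheorem}, and your tabulation of degrees ($(p-3)j+m(p-1)+2$ for family (3), maximised at $j=(p-4)/3$, $m=2$ when $p\equiv_{(3)}1$ and at $j=p-2$, $m=4$ when $p\equiv_{(3)}-1$) together with the dominance checks for the other families and named invariants carries this out correctly. Your caveat that family (3) can be empty for small primes, leaving the bound non-sharp there, is consistent with the paper's observation that the actual Noether numbers are $22$ for $p=5$ and $16$ for $p=7$.
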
 

It follows from the proof of Theorem~\ref{maintheorem} that $\mcG$ is a SAGBI basis for $\mathbb{F}[V]^{SL_2(\mathbb{F}_p)}$.
This means that the set $\lm(\mcG)$ generates the lead term algebra of $\mathbb{F}[V]^{SL_2(\mathbb{F}_p)}$ and
if $f\in\mathbb{F}[V]^{SL_2(\mathbb{F}_p)}$ then $\lm(f)$ can be written as a product of elements from $\lm(\mcG)$.

\begin{Cor}  $\mathbb{F}[V]^{SL_2(\mathbb{F}_p)}$ does not have a finite SAGBI basis using the graded reverse lexicographical order
with $a_0<a_1<a_2<a_3$.
\label{nofintesagbicor} \end{Cor}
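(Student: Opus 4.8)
The plan is to reduce the corollary to showing that the lead term algebra of $\slinv$ is not finitely generated. Indeed, if $\slinv$ had a finite SAGBI basis $\mathcal{S}$ (with respect to the fixed grevlex order), then by the very definition of a SAGBI basis the finite set $\lm(\mathcal{S})$ would generate $\lt(\slinv)$ as an algebra; so it suffices to prove that $\lt(\slinv)$ admits no finite algebra generating set. From the proof of Theorem~\ref{maintheorem} we already know that $\lt(\slinv)$ is generated by the monomials $\lm(\mcG)$, and that $\lm(\mcG)$ contains the infinite family
\begin{equation*} n_i=a_3^pa_1^{3+i(p-1)},\qquad i\geq 0, \end{equation*}
namely the lead monomials of $\tilde e$ (the case $i=0$) and of the t\^{e}te-\`{a}-t\^{e}tes $h_i$ (the cases $i\geq 1$). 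I would prove that every $n_i$ is \emph{indecomposable} in $\lt(\slinv)$, i.e.\ is not a product of two nonconstant monomials of $\lt(\slinv)$. Since the minimal generators of a monomial algebra are exactly its indecomposable monomials and the $n_i$ are pairwise distinct, this forces any generating set to be infinite.

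The single fact driving the argument is a bookkeeping claim about the variables $a_0$ and $a_2$: every element of $\lm(\mcG)$ that is free of $a_0$ and $a_2$ has $a_3$-exponent at least $p$. Scanning the explicit lead monomials, the generators supported on $\{a_1,a_3\}$ are precisely the $\alpha_{ij}$ (which include $n_i=\alpha_{i1}$), $\lm(\delta)=a_3^{pc}$, and, when $p\equiv_{(3)}-1$, $\lm(\tilde d)$; each carries a power of $a_3$ that is a positive multiple of $p$. Every other generator --- $\lm(D),\lm(K),\lm(L),\lm(Na_0)$, the transfer families $\gamma_j,\beta_j,\Delta_j,\phi_j$, and the monomials $\epsilon_{ij},\lambda,\mu,\eta_j$ --- carries a positive power of $a_0$ or of $a_2$ throughout its stated range. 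Granting this, suppose $n_i=\mu_1\mu_2$ with $\mu_1,\mu_2$ nonconstant monomials of $\lt(\slinv)$. Because exponents only add and $n_i$ involves neither $a_0$ nor $a_2$, both $\mu_1$ and $\mu_2$ are free of $a_0$ and $a_2$; writing each as a product of generators from $\lm(\mcG)$, every factor occurring is then free of $a_0$ and $a_2$ and hence has $a_3$-exponent at least $p$. Thus $\mu_1$ and $\mu_2$ each have $a_3$-exponent at least $p$, forcing the $a_3$-exponent of $n_i$ to be at least $2p$ and contradicting the fact that it equals $p$.

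Consequently each $n_i$ is indecomposable, so it must appear in every algebra generating set of $\lt(\slinv)$; as there are infinitely many distinct $n_i$, the lead term algebra is not finitely generated, and the corollary follows from the reduction above. The only genuine work is the bookkeeping claim of the second paragraph, and this is where I expect the main (though routine) obstacle to lie: one must verify that the transfer-image generators keep a strictly positive $a_2$-exponent across the entire range of $j$ used in $\mcB$ --- for instance $p-1-2j>0$ for $\beta_j$ when $j<(p-1)/3$ (Lemma~\ref{pm1amod2}), together with the analogous inequalities already recorded in the proofs of Lemmas~\ref{gammafamily}, \ref{Deltafam} and \ref{pm1amod4} --- so that no unforeseen monomial supported on $\{a_1,a_3\}$ with $a_3$-exponent $p$ slips into $\lm(\mcG)$ and spoils the degree count.
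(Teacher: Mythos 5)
Your proposal is correct and takes essentially the same route as the paper: the paper's proof likewise observes that no positive pure power of $a_1$ occurs in $\lm(\mcG)$ and that any element of $\lm(\mcG)$ divisible by $a_3$ is divisible by $a_3^p$, and concludes that the monomials $\lm(h_i)=a_3^pa_1^{p+2+(i-1)(p-1)}$ are indecomposable in the lead term algebra, so no finite SAGBI basis can exist. Your write-up merely merges those two observations into the single claim about generators free of $a_0$ and $a_2$ and carries out the (correct, routine) range-checking explicitly.
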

\begin{proof}
Observe that if $a_1^j \in LM(\mcG)$ then $j=0$ and if $m \in LM(\mcG)$ with $a_3$ dividing $m$, then $a_3^p$ divides $m$. 
Thus $LM(h_i)=a_3^pa_1^{p+2+(i-1)(p-1)}$ is indecomposable in the lead term algebra of $\mathbb{F}[V]^{SL_2(\mathbb{F}_p)}$.
\end{proof}

\ifx\undefined\bysame
\newcommand{\bysame}{\leavevmode\hbox to3em{\hrulefill}\,}
\fi

\end{document}